\newcommand{\multiline}[1]{%
  \begin{tabularx}{\dimexpr\linewidth-\ALG@thistlm}[t]{@{}X@{}}
    #1
  \end{tabularx}
}
\newcommand{\algmargin}{\the\ALG@thistlm}
\algnewcommand{\parState}[1]{\State%
  \begin{minipage}[t]{\dimexpr\linewidth-\algorithmicindent-\algorithmicindent}{#1\strut}\end{minipage}}
\newcommand{\myState}[1]{\State\parbox[t]{\dimexpr\linewidth-\algorithmicindent}{#1\strut}}
\newcommand{\norm}[1]{\|#1\|}
\newcommand{\Norm}[1]{\left\|#1\right\|}
\newcommand{\nnn}[1]{|\!|\!|#1|\!|\!|}
\newcommand{\operator}[1]{\mathsf{#1}}
\newcommand{\F}{\operator{F}}   
\renewcommand{\d}{\operator{d}}
\newcommand{\ds}{\,\d s}
\newcommand{\dx}{\,\d x}
\newcommand{\jmp}[1]{\left\llbracket#1\right\rrbracket}
\newcommand{\dprod}[1]{\left<#1\right>_{{\H}^\star\times {\H}}}
\newcommand{\Je}{\J_\varepsilon}
\newcommand{\sL}{S_{L^\perp}}
\newcommand{\C}{\mathrm{C}}
\newcommand{\J}{\mathsf{E}}
\newcommand{\hsp}[2]{#1_{#2}}
\newcommand{\MX}{\mathcal{P}(\hsp{L}{v})}
\newcommand{\ML}{\H}
\newcommand{\cU}{\kappa_\nu}
\newcommand{\R}{\mathsf{R}}
\newcommand{\g}{\mathsf{g}}
\newcommand{\p}{\mathfrak{p}}
\newcommand{\q}{\mathfrak{q}}
\renewcommand{\r}{\mathfrak{r}}
\newcommand{\E}{\J}
\renewcommand{\H}{\mathbb{X}}
\newcommand{\Ee}{\E_\varepsilon}
\newcommand{\HS}{\mathrm{H}_0^1} 
\newcommand{\X}{\mathbb{X}}
\renewcommand{\L}{\mathrm{L}}
\renewcommand{\P}{\mathbb{P}}
\renewcommand{\vec}[1]{\bm{\mathsf{#1}}}
\DeclareMathOperator*{\argmax}{arg\,max}
\DeclareMathOperator*{\spn}{span}
\DeclareMathOperator*{\imag}{im}
\newtheorem{theorem}{Theorem}[section]
\newtheorem{proposition}[theorem]{Proposition} 
\newtheorem{cor}[theorem]{Corollary}
\theoremstyle{definition}
\newtheorem{remark}[theorem]{Remark}
\title[Local Minimax Galerkin Methods]{Adaptive local minimax Galerkin methods\\ for variational problems}
\author[P.~Heid \and T.~P.~Wihler]{Pascal Heid \and Thomas P.~Wihler}
\address{Mathematics Institute, University of Bern, CH-3012 Switzerland}
\email{pascal.heid@math.unibe.ch \and wihler@math.unibe.ch}
\thanks{The authors acknowledge the financial support of the Swiss National Science Foundation under grant no. 200021\underline{\phantom{x}}182524}
\keywords{%
Variational problems,
critical point theory,
saddle points,
mountain pass algorithms,
local minimax scheme,
iterative Galerkin discretizations,
finite element methods,
adaptive mesh refinements,
semilinear elliptic PDE
}
\subjclass[2010]{35A15, 35B38, 35J91, 47J25, 49M25, 58E05, 58E30, 65J15, 65N30, 65N50}
\begin{document}

\begin{abstract}
In many applications of practical interest, solutions of partial differential equation models arise as critical points of an underlying (energy) functional. If such solutions are saddle points, rather than being maxima or minima, then the theoretical framework is non-standard, and the development of suitable numerical approximation procedures turns out to be highly challenging. In this paper, our aim is to present an iterative discretization methodology for the numerical solution of nonlinear variational problems with multiple (saddle point) solutions. In contrast to traditional numerical approximation schemes, which typically fail in such situations, the key idea of the current work is to employ a simultaneous interplay of a previously developed local minimax approach and adaptive Galerkin discretizations. We thereby derive an adaptive \emph{local minimax Galerkin (LMMG)} method, which combines the search for saddle point solutions and their approximation in finite-dimensional spaces in a highly effective way. Under certain assumptions, we will prove that the generated sequence of approximate solutions converges to the solution set of the variational problem. This general framework will be applied to the specific context of finite element discretizations of (singularly perturbed) semilinear elliptic boundary value problems, and a series of numerical experiments will be presented. 
\end{abstract}

\maketitle

\section{Introduction}

Consider a (real) Hilbert space $\H$, equipped with an inner product $(\cdot,\cdot)_{\H}$ and an induced norm~$\norm{\cdot}_{\H}$. Given a nonlinear operator $\F:\,\H \to \H^\star$, where ${\H}^\star$ signifies the dual space of ${\H}$, we focus on solutions of the equation
\begin{align} \label{eq:F=0}
 u \in {\H}: \qquad \F(u)=0 \quad \text{in } {\H}^\star.
\end{align}
This problem is variational if there exists an underlying functional $\J \in \C^1({\H};\mathbb{R})$ such that solutions $u\in {\H}$ of \eqref{eq:F=0} arise as critical points of~$\J$, i.e. if they satisfy the Euler--Lagrange equation
\begin{align} \label{eq:J'=0}
 u \in {\H}: \qquad \J'(u)=0 \quad \text{in } {\H}^\star,
\end{align}
with $\J'$ denoting the Fr\'{e}chet derivative of $\J$. A solution to the Euler--Lagrange equation~\eqref{eq:J'=0} is called a \emph{critical point}, and the value of the  functional $\J$ at a critical point is termed \emph{critical value}. 

The purpose of this paper is to provide a new adaptive algorithm for the numerical solution of~\eqref{eq:J'=0}, which exploits the theoretical framework of the mountain pass critical point theory, in combination with adaptive Galerkin discretizations. The key idea is to exploit an automatic and simultaneous interplay of these two approaches in order to design a highly effective numerical approximation procedure. This will be illustrated in the specific context of classical adaptive finite element discretizations of singularly perturbed semilinear partial differential equations (PDE); such problems have wide ranging applications in practice (including, e.g., nonlinear reaction-diffusion in ecology and chemical models~\cite{CaCo03,Ed05,Fr08,Ni11,OkLe01}, economy~\cite{BaBu95}, or classical and quantum physics~\cite{BeLi83,St77}). Yet, they are notoriously challenging to solve numerically due to the existence of several (or even infinitely many) solutions and/or the appearance of singular effects including boundary layers and (multiple) spikes.

\subsection*{Minimax theory}
Basic critical point theory in the calculus of variations pays attention to critical points that are either local minima or maxima of a given functional~$\E\in\C^1(\X;\mathbb{R})$. Many solutions to nonlinear variational problems of practical relevance, however, occur as unstable critical points, i.e. they are neither a (local) maximum nor minimum. Such unstable critical points are called \emph{saddle points}: More precisely, a saddle point of a functional $\J$ is an element $u^\star \in {\H}$ such that $\J'(u^\star)=0$, and for any (open) neighbourhood $\mathcal{U}(u^\star)$ of $u^\star$ there are $u,v \in \mathcal{U}(u^\star)$ with
\[
 \J(u)<\J(u^\star)<\J(v).
\]

In the minimax theory of Ambrosetti--Rabinowitz~\cite{AmRa:73}, see also~\cite[\S2]{Rabinowitz:86}, saddle points appear as solution to a two-level optimization problem of the form
\[
 \min_{A \in \mathcal{A}} \max_{u \in A} \J(u),
\]
where $\mathcal{A}$ is a collection of subsets of ${\H}$. In this context, a central result for the existence of (multiple) critical points, especially of saddle points, is the mountain pass theorem. It is based on the so-called \emph{Palais--Smale compactness condition} of the functional $\J$:
\begin{enumerate}[(PS)]
\item Any sequence $\{u^k\}_k \subset {\H}$ for which $\{\J(u^k)\}_k$ is a bounded sequence in $\mathbb{R}$, and $\J'(u^k) \to 0$ in ${\H}^\star$, as $k \to \infty$, possesses a convergent subsequence. 
\end{enumerate}

\begin{theorem}[Mountain pass theorem] \label{thm:mountainpass}
Let ${\H}$ be a (real) Hilbert space and $\J \in \C^1({\H};\mathbb{R})$ satisfying the Palais--Smale condition {\rm (PS)}. Suppose that 
 \begin{enumerate}[\rm (a)]
 \item $\J(0)=0$;
  \item there are constants $r,\alpha >0$ such that $\J(v) \geq \alpha$ for all $v\in\X$ with $\norm{v}_{\X}=r$;
  \item there exists an element $h \in {\H}$ with $\norm{h}_{\H}>r$ such that $\J(h) \leq 0$.
 \end{enumerate}
Then, $\J$ possesses a critical value $c \geq \alpha$, which can be expressed by
\[
 c=\inf_{g \in \Gamma} \max_{u \in g([0,1])} \J(u),
\]
where $\Gamma=\{g \in \C^0([0,1];{\H}):\,g(0)=0, \ g(1)=h\}$.
\end{theorem}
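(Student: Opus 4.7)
The plan is to establish the theorem by a classical deformation argument driven by a pseudo-gradient flow of $\J$. First I would verify the lower bound $c \geq \alpha$: any path $g \in \Gamma$ connects $0$ to $h$ with $\norm{h}_\X > r$, so by continuity the real-valued function $t \mapsto \norm{g(t)}_\X$ takes the value $r$ at some $t^\star \in (0,1)$; by hypothesis (b), $\J(g(t^\star)) \geq \alpha$, and taking the infimum over $g$ then yields $c \geq \alpha > 0$.

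The core step is to show that $c$ is actually a critical value, which I would do by contradiction. Suppose no $u^\star \in \X$ satisfies $\J'(u^\star) = 0$ and $\J(u^\star) = c$. Combined with (PS), a routine compactness argument produces constants $\bar\varepsilon, \delta > 0$, which we may assume satisfy $\bar\varepsilon < \alpha/2$, such that $\norm{\J'(u)}_{\X^\star} \geq \delta$ for every $u$ in the strip $\J^{-1}([c - \bar\varepsilon, c + \bar\varepsilon])$; otherwise, one could extract a Palais--Smale sequence at level $c$ whose limit would be a critical point at critical value $c$. The classical deformation lemma then supplies, for a sufficiently small $\varepsilon \in (0,\bar\varepsilon)$, a continuous map $\eta:\,[0,1]\times \X \to \X$ such that (i) $\eta(0,\cdot) = \mathrm{id}_\X$; (ii) $\eta(s,u) = u$ whenever $\J(u) \notin [c - \bar\varepsilon, c + \bar\varepsilon]$; and (iii) $\eta(1, \{u : \J(u) \leq c+\varepsilon\}) \subset \{u : \J(u) \leq c-\varepsilon\}$.

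To close the argument, I would pick a near-optimal path $g \in \Gamma$ with $\max_{u \in g([0,1])} \J(u) \leq c + \varepsilon$ (possible by definition of the infimum) and set $\tilde g(t) := \eta(1, g(t))$. Since $\J(0) = 0$ and $\J(h) \leq 0$ both lie strictly below $c - \bar\varepsilon > 0$ (using $\bar\varepsilon < \alpha/2 \leq c/2$), property (ii) yields $\tilde g(0) = 0$ and $\tilde g(1) = h$, so $\tilde g \in \Gamma$; but property (iii) forces $\max_{u \in \tilde g([0,1])} \J(u) \leq c - \varepsilon < c$, contradicting the definition of $c$ as infimum. Hence $c$ must be a critical value.

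The main obstacle in this programme is the construction of $\eta$ itself: since $\J \in \C^1$ is not assumed to be of class $\C^{1,1}$, its Fr\'echet derivative need not be locally Lipschitz, so one cannot flow directly along $-\J'$. The standard remedy is a partition-of-unity construction on the open set of regular points, producing a locally Lipschitz pseudo-gradient vector field $V$ satisfying $\norm{V(u)}_\X \leq 2\norm{\J'(u)}_{\X^\star}$ and $\langle \J'(u), V(u) \rangle \geq \norm{\J'(u)}_{\X^\star}^2$; flowing a suitably truncated ODE of the form $\dot\sigma = -\chi(\sigma) V(\sigma)$, with a cut-off $\chi$ supported in a neighbourhood of the strip, then produces the required $\eta$.
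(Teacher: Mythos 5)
The paper does not prove this theorem; it is stated as a classical background result and attributed to Ambrosetti--Rabinowitz and Rabinowitz, so there is no in-paper proof to compare against. Your argument is the standard deformation-lemma proof of the Mountain Pass Theorem from exactly those references, and it is sound in all its essential steps: the lower bound $c\ge\alpha$ via the intermediate value theorem applied to $t\mapsto\norm{g(t)}_\X$, the extraction of constants $\bar\varepsilon,\delta>0$ from the negation of criticality together with (PS), the quantitative deformation lemma supplying $\eta$, and the final contradiction obtained by deforming a near-optimal path. The parameter bookkeeping is also correct: with $\bar\varepsilon<\alpha/2\le c/2$ the endpoint levels $\J(0)=0$ and $\J(h)\le0$ lie strictly below $c-\bar\varepsilon$, so property (ii) of $\eta$ keeps $\tilde g$ inside $\Gamma$. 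Your closing remark about the pseudo-gradient construction correctly identifies why one cannot flow along $-\J'$ directly when $\J$ is merely $\C^1$, and the partition-of-unity construction you sketch is precisely the classical remedy. In short, this is a correct and complete outline of the standard proof; the only thing the paper itself offers here is a citation.
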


\subsection*{Mountain pass type numerical algorithms}
The importance of saddle points appearing in natural science applications has raised a high demand for non-standard numerical approaches for nonlinear variational problems. Although this endeavour turns out to be tremendously challenging in practice, it seems natural to apply the framework provided by the minimax theory, and, in particular, to exploit the analytical foundation of the mountain pass theorem for the purpose of designing numerical solution algorithms for~\eqref{eq:J'=0}. This route was first pursued by Choi and McKenna in their pathbreaking paper \cite{choimckenna:93}. In conjunction with its theoretical counterpart, their scheme is widely known as the \emph{Mountain Pass Algorithm (MPA)}. The core of the method is an iterative steepest descent procedure which takes care of finding a minimum along a local mountain range of $\J$. This part of the algorithm is expressed in terms of a \emph{linear equation}; in the context of nonlinear PDE, for example, this problem can be solved by traditional numerical discretization schemes such as the finite element method (FEM).

Although the work of Choi and McKenna can certainly be considered a milestone in the development of numerical solution schemes for nonlinear variational problems, many issues have remained open. For instance, their paper \cite{choimckenna:93} does not contain any error or convergence analysis of the proposed MPA. Moreover, the MPA will typically find critical points of Morse index 0 or 1 only. Several subsequent papers have used the MPA approach in order to achieve further progress on the topic: In the article~\cite{dingcostachen:99}, for example, a numerical algorithm to compute sign-changing solutions of the semilinear elliptic PDE 
\begin{align} \label{eq:semilinearelliptic1}
 -\Delta u = f(\cdot,u),
\end{align}
subject to Dirichlet boundary conditions, was proposed. The main idea is to construct a \emph{local link} from a known critical point to a new critical point. The resulting \emph{high-linking algorithm (HLA)} presumes that a mountain pass solution is already available, and the MPA \cite{choimckenna:93} is applied as part of the scheme. This work showed that the HLA is able to generate sign-changing solutions for non-symmetric domains and odd nonlinearities, which could not be found by the original MPA. Moreover, in the special case of symmetric domains, the paper~\cite{CoDiNe2001} has focused on sign-changing solutions of \eqref{eq:semilinearelliptic1}. The approach is based on a \emph{modified Mountain Pass Algorithm (MMPA)}, which applies a restriction of the underlying functional to the fixed point set of certain compact topological groups representing the symmetry of the domain. Still, no convergence analysis was done by then.

\subsection*{Local minimax approach}
Further progress in the development of numerical schemes for nonlinear variational problems with multiple solutions was made by Li and Zhou in their paper~\cite{LiZhou:01}. Beginning with a set of already known critical points, the idea is to use solution-submanifolds of so-called peak selection mappings, whose local minima occur as new unstable critical points of the underlying functional. One of the crucial advantages of the \emph{local minimax (LMM) algorithm} proposed by Li and Zhou is that the generated sequence of approximated solutions exhibits a decay of the associated energy functional. Moreover, the LMM algorithm may find critical points of Morse index greater than 1. In their subsequent paper~\cite{LiZhou:02}, Li and Zhou have introduced a new step-size rule for the LMM procedure leading to the \emph{modified local minimax algorithm}. Moreover, a first convergence analysis has been derived; remarkably, under certain assumptions, the LMM algorithm is able to generate approximation sequences which contain converging subsequences to a critical point of the energy functional. In addition, for initial guesses sufficiently close to an isolated critical point, the authors have proved that the iteration converges precisely to that point. This result was improved further by Zhou in \cite[Thm.~2.4]{Zhou:17}; we note that the proof of that result can be adapted in such a way that it yields the convergence of the generated sequence to the set of critical points. 

The original local minimax approach by Li and Zhou~\cite{LiZhou:01,LiZhou:02} has been studied, modified, generalized, and applied in various subsequent papers by Zhou, and other authors. We mention the work~\cite{XieYuanZhou:12} where a modified LMM method to find multiple solutions of singularly perturbed semilinear PDE with Neumann boundary conditions has been presented. In that paper, the authors have proposed an 'ad hoc' computational idea on how local mesh refinements in the framework of finite element Galerkin spaces, with a special emphasis on the resolution of spike layers, may be applied. Specifically, after a (fixed) number of steps in the LMM, mesh refinements are performed whenever the residual is not sufficiently small; the refinements, in turn, are performed simply by subdividing any elements where the numerical solution tends to form a spike. A further article~\cite{Yao:15} has paid special attention to the convergence analysis of the LMM scheme on a finite dimensional Galerkin space. Within this setting, it has been proved that the generated discrete sequence possesses subsequences that converge to a solution of the discrete problem. Furthermore, for finite element discretizations of semilinear elliptic equations, as the mesh size tends to zero, it has been shown that the subset of discrete solutions which can be approximated by the LMM algorithm converges to a subset of solutions of the original problem. This does not, however, imply the convergence of the generated sequence to a solution of the problem. Finally, we point to the work \cite{WangZhou:05} which offers a modification of the LMM method based on applying a projection onto subspaces with certain symmetry properties; this, in turn, allows to find saddle points with corresponding symmetric features (including critical points of higher Morse index). 

\subsection*{Contribution} Whilst most previous works on local minimax methods focus on  abstract (not necessarily finite dimensional) spaces (and are thus not feasible in actual simulation practice), the aim of the current work is to provide a \emph{computational} approximation procedure for saddle points of a functional $\J$, which is based on a simultaneous interplay of the LMM approach proposed in~\cite{LiZhou:01} and efficient adaptive Galerkin space enrichments. This idea follows the recent developments on the (adaptive) \emph{iterative linearized Galerkin (ILG)} methodology~\cite{HeidWihler:18,HeidWihler:19,CongreveWihler:17,AmreinWihler:14,AmreinWihler:15,HoustonWihler:18}, whereby adaptive discretizations and iterative linearization solvers are combined in an intertwined way; we also refer to the closely related works~\cite{GantnerHaberlPraetoriusStiftner:17,ErnVohralik:13,El-AlaouiErnVohralik:11,BernardiDakroubMansourSayah:15,GarauMorinZuppa:11,HeidStammWihler:19}.

A key building block of the numerical scheme to be presented in this paper concerns the decision of whether hierarchical Galerkin space enrichments or LMM iterations on the current discrete space should be given preference. This is accomplished by estimating the residual on a given Galerkin space in terms of a computable indicator. Once the residual is found sufficiently small, we conclude that any further LMM iteration will not significantly reduce the residual on the present discrete space. Consequently, by making use of local residual indicators, we will hierarchically enrich the Galerkin space. On the theoretical side, under certain assumptions, we prove that the sequence generated by the adaptive LMM Galerkin (LMMG) algorithm converges to the set of critical points of $\J$. 

Special attention will be given to (singularly perturbed) semilinear elliptic PDE in the context of standard finite element discretizations. We will apply the approach presented in~\cite{Verfurth:13}, which has been developed for linear elliptic problems, in order to derive a posteriori residual bounds for the LMMG algorithm that are robust with respect to the singular perturbation parameter; see also~\cite{AmreinWihler:15} for related results in the context of Newton-type linearizations of semilinear singularly perturbed PDE. Our numerical tests display optimal convergence rates with respect to the number of elements in the mesh.

\subsection*{Outline of the paper} 
We will briefly recall the main concepts of the local minimax algorithm from Li and Zhou in \S\ref{sc:LZ}, and present some summarized results from~\cite{LiZhou:01,LiZhou:02,Yao:15}. Furthermore, the focus of \S\ref{sc:LMMG} is on the new (abstract) adaptive LMMG algorithm that exploits an interplay between the classical LMM method and adaptively enriched general Galerkin discretizations. In addition, under certain assumptions, we will prove that the approximated discrete solution sequence generated by the proposed LMMG procedure converges to the set of solutions of the original problem. Then, in \S\ref{sc:SL}, we will show that our general theory applies to a class of singularly perturbed semilinear elliptic boundary value problems. A series of numerical experiments will be presented in \S\ref{sec:numexp}. 

\section{The local minimax approach by Li and Zhou}\label{sc:LZ}

In this section, we revisit the local minimax method introduced in~\cite{LiZhou:01,LiZhou:02}. For the convenience of the reader, we will recall the relevant definitions, and point to some existing results.  In the sequel, let $\J \in \C^1({\H};\mathbb{R})$ be a given functional that satisfies the Palais--Smale compactness condition~{(PS)}.

\subsection{Peak selection}

Let $L \subset {\H}$ be any \emph{closed} subspace, and denote by $L^\perp$ its orthogonal complement with respect to the ${\H}$-inner product: $L \oplus L^\perp={\H}$. Then, for any $v$ in the sphere $S_{L^\perp}:=\{v \in L^\perp:\norm{v}_{\H}=1\}$, we define the closed half space
\[
\hsp{L}{v}:=L+\{tv:\,t\ge 0\}=\{u+tv: u \in L, \ t \geq 0\}.
\] 
For given $v\in\sL$, a point $w_0 \in\hsp{L}{v}$ is called a local maximum of $\J$ in $\hsp{L}{v}$ if there exists $\delta>0$ such that $\J(w)\le\J(w_0)$ for all $w\in\hsp{L}{v}$ with $\norm{w-w_0}_{\H}<\delta$; the set of all local maxima of $\J$ in $\hsp{L}{v}$ is signified by $\MX$. 

A single-valued mapping $p:\,\sL \to {\H}$ with $p(v) \in \MX$ for all $v \in \sL$ is called a \emph{peak selection of $\J$} (with respect to~$L$). Furthermore, for a point $x_0 \in \sL$, we say that $\J$ has a \emph{local peak selection} at $x_0$ (with respect to~$L$) if there exists $\delta>0$ and a mapping $p:\,\{v\in\sL:\,\norm{v-x_0}_{\H}<\delta\} \to \ML$ with $p(v) \in \MX$ for all $v$ in the domain of the local mapping $p$.

The following observation about local peak selections is instrumental in view of an algorithmic development of the mountain pass theory.

\begin{proposition}[Theorem~2.1 of~\cite{LiZhou:01}]\label{pr:LZ}
Suppose that $\J$ has a local peak selection $p$ at some point $x_0\in\sL$ (with respect to $L$). Furthermore, let the following conditions hold true:
\begin{enumerate}[\rm (a)]
\item $p$ is continuous at $x_0$;
\item it holds $\inf_{u\in L}\norm{p(x_0)-u}_{\H} \geq \alpha$ for some constant $\alpha>0$;
\item $x_0$ is a local minimum point of $\J(p(\cdot))$ on $\sL$. 
\end{enumerate}
Then, $p(x_0)$ is a critical point of $\J$.
\end{proposition}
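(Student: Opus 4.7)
The plan is to show $\J'(w_0)=0$ in $\H^\star$, where $w_0:=p(x_0)$. I would split the argument according to the orthogonal decomposition $\H=L\oplus L^\perp$: first confirm that $\J'(w_0)$ annihilates $L$ together with the distinguished direction $x_0\in L^\perp$, and then use condition~(c) combined with continuity of $p$ to knock out every other direction in $L^\perp$.

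First, decompose $w_0=u_0+t_0 x_0$ with $u_0\in L$ and $t_0\geq 0$. Condition~(b) forces
\[
 t_0=\inf_{u\in L}\norm{w_0-u}_{\H}\geq\alpha>0,
\]
so $w_0$ sits strictly off $L$. Because $p(x_0)\in \mathcal{P}(\hsp{L}{x_0})$, the function $(u,t)\mapsto\J(u+t x_0)$ attains a local maximum at the interior point $(u_0,t_0)\in L\times(0,\infty)$, whence all directional derivatives vanish:
\[
 \J'(w_0)u=0 \quad \text{for every } u\in L, \qquad \J'(w_0)x_0=0.
\]

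Next, fix $w\in L^\perp$ with $\norm{w}_{\H}=1$ and set, for $\varepsilon\in\mathbb{R}$ small,
\[
 v_\varepsilon:=\frac{x_0+\varepsilon w}{\norm{x_0+\varepsilon w}_{\H}}\in\sL.
\]
Continuity of $p$ at $x_0$ (assumption~(a)) gives $p(v_\varepsilon)=u_\varepsilon+t_\varepsilon v_\varepsilon\to w_0$ as $\varepsilon\to 0$, with $u_\varepsilon\to u_0$ in $L$ and $t_\varepsilon\to t_0>0$. In particular, for $|\varepsilon|$ sufficiently small the auxiliary point $u_\varepsilon+t_\varepsilon x_0\in\hsp{L}{x_0}$ lies in the neighbourhood of $w_0$ on which $w_0$ realises its local maximum. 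The two key comparisons are then
\[
 \J(p(v_\varepsilon))\geq \J(w_0) \ \text{by (c)}, \qquad \J(w_0)\geq \J(u_\varepsilon+t_\varepsilon x_0) \ \text{by local maximality on } \hsp{L}{x_0}.
\]
Subtracting and Taylor-expanding to first order (valid since $\J\in\C^1$ and $u_\varepsilon+t_\varepsilon x_0\to w_0$) yields
\[
 0\leq \J(u_\varepsilon+t_\varepsilon v_\varepsilon)-\J(u_\varepsilon+t_\varepsilon x_0)= t_\varepsilon\J'(w_0)(v_\varepsilon-x_0)+o(\varepsilon).
\]
A short computation using $\J'(w_0)x_0=0$ gives $\J'(w_0)(v_\varepsilon-x_0)=\varepsilon\J'(w_0)w+O(\varepsilon^2)$. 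Dividing by $\varepsilon>0$ and letting $\varepsilon\to 0^+$ leads to $t_0\J'(w_0)w\geq 0$; repeating with $w$ replaced by $-w$ gives the reverse inequality. Since $t_0>0$, this forces $\J'(w_0)w=0$ for every $w\in L^\perp$, and together with the first step $\J'(w_0)=0$ on $\H=L\oplus L^\perp$.

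The main obstacle I expect is keeping the Taylor remainder under control: continuity of $p$ only provides $\norm{p(v_\varepsilon)-w_0}_{\H}=o(1)$, not the $O(\varepsilon)$ bound one would need to expand $\J(p(v_\varepsilon))-\J(w_0)$ directly. The trick above is to compare $\J(u_\varepsilon+t_\varepsilon v_\varepsilon)$ with $\J(u_\varepsilon+t_\varepsilon x_0)$ instead of with $\J(w_0)$: the increment $t_\varepsilon(v_\varepsilon-x_0)$ is genuinely $O(\varepsilon)$ by construction, so the remainder is truly $o(\varepsilon)$ and the squeeze between the two one-sided inequalities closes as $\varepsilon\to 0^\pm$.
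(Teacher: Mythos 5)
Your argument is correct, and it is essentially the proof of Li--Zhou's Theorem~2.1 (which the paper cites without reproving), restated in a direct form rather than by contradiction. In the original, one assumes $\J'(p(x_0))\neq0$, takes $w$ to be the Riesz representative of $-\J'(p(x_0))$ (which lies in $L^\perp$ and is orthogonal to $x_0$ by the first-order condition on $L_{x_0}$), and then shows $\J(p(v_\varepsilon))<\J(p(x_0))$ for small $\varepsilon>0$, contradicting~(c); the key comparison $\J(p(v_\varepsilon))=\J(u_\varepsilon+t_\varepsilon v_\varepsilon)$ versus $\J(u_\varepsilon+t_\varepsilon x_0)\le\J(w_0)$ is exactly the step you use. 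Your version proves $\J'(w_0)w=0$ for \emph{every} $w\in L^\perp$ by a two-sided squeeze in $\varepsilon$, which gives the same conclusion and is arguably cleaner since it avoids the Riesz map altogether. One remark worth making explicit when writing this up: the expansion
\[
\J(u_\varepsilon+t_\varepsilon v_\varepsilon)-\J(u_\varepsilon+t_\varepsilon x_0)=t_\varepsilon\,\J'(w_0)(v_\varepsilon-x_0)+o(\varepsilon)
\]
requires a uniform (in the base point) first-order estimate, since the base point $u_\varepsilon+t_\varepsilon x_0$ moves with~$\varepsilon$. This follows cleanly from the integral form of the mean value theorem, $\J(a+h)-\J(a)=\int_0^1\J'(a+sh)h\,\d s$, together with the continuity of $\J'$ at $w_0$ and $\norm{h}_\X=t_\varepsilon\norm{v_\varepsilon-x_0}_\X=O(\varepsilon)$; pointwise Fr\'echet differentiability alone would not suffice. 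You implicitly rely on this when you invoke $\J\in\C^1$, but it is worth spelling out.
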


For a given peak selection $p$ of $\J$ (with respect to $L$), we consider the image of $p$ given by
\[
\imag(p):=\{p(v):\,v \in S_{L^\perp}\}\subset \ML.
\]
Then, under the assumptions (a)--(c), the above result shows that a local minimizer of $\J$ in $\imag(p)$ is a critical point of $\J$ in ${\H}$. 

\subsection{Local minimax scheme}\label{sc:minimax}

We will briefly outline the main ideas of the minimax algorithm from~\cite[\S2]{LiZhou:02}. To this end, for $n\ge 0$, consider previously known critical points $w_1,\ldots,w_{n-1}\in {\H}$ ordered in such a way that 
\begin{equation}\label{eq:w1n}
\J(w_1)\le\ldots\le\J(w_{n-1}),
\end{equation} 
and define the linear subspace 
\[
L:=\spn\{w_1,\dotsc,w_{n-1}\}.
\]
We aim to find a new critical point $w_n\in\X$ by pursuing the following procedure:
\begin{enumerate}[(i)]
\item For a given $v^0 \in \sL$, we suppose that there is $t^0>0$ such that
\begin{align} \label{eq:ascent1}
 w^0:=p(v^0)=u^0+t^0 v^0\in L_{v^0},
\end{align}
for some $u^0 \in L$.
\item Now we find a local minimum of $\J$ in a vicinity of $w^0$. This is accomplished by moving along the steepest descent direction, $d^0\in {\H}$, of $\J$ at $w^0$, given by
\[
(d^0,v)_{\H}:=-\dprod{\J'(w^0),v}\qquad\forall v\in {\H},
\]
where $\dprod{\cdot,\cdot}$ signifies the dual product. Note that this is a \emph{linear} problem. Then, for a suitable step size $s^0>0$, we replace $v^0$ by a new direction
\begin{equation}\label{eq:s0}
v^1:=v(s^0),
\end{equation}
where, for $s>0$, we define
\begin{equation}\label{eq:v(s)}
v(s):=\frac{v^0+sd^0}{\norm{v^0+sd^0}_{\H}}  \in \sL; 
\end{equation}
here, notice that $v^0+sd^0 \in L^\perp$ since $p(v^0)$ is a local minimum of $\E$ on $L_{v^0}$, and $d^0$ is the steepest descent direction of $\E$ at $p(v^0)$ (cf.~\cite[Lem.~1.2]{LiZhou:02}); thus, $v(s) \in \sL$. Furthermore, under the assumptions (a) and (b) in the above Proposition~\ref{pr:LZ}, if $w^0$ is \emph{not} a critical point of $\J$ (i.e. $\J'(w^0)\neq 0$), then, for any $\delta>0$ with 
\begin{equation}\label{eq:deltaE'}
\norm{\J'(w^0)}_{{\H}^\star}>\delta,
\end{equation}
there exists $\sigma_0>0$ such that it holds the descent property
\begin{align} \label{eq:stepsize}
\J(p(v(s)))-\J(w^0)<-\alpha\delta\norm{v(s)-v^0}_{\H}\qquad\forall s\in(0,\sigma_0).
\end{align}
Hence, selecting the step size $s^0\in(0,\sigma_0)$ in~\eqref{eq:s0} appropriately, and letting 
\begin{equation}\label{eq:w1}
w^1:=p(v^1)=u^1+t^1v^1,
\end{equation} 
for some $u^1\in L$ and $t^1\ge 0$, we obtain $\J(w^1)<\J(w^0)$.
\end{enumerate}

Upon repeating step (ii), we obtain a sequence $\{w^k=p(v^k)\}_{k}\subset\imag(p)$ such that $\J(w^k)$ is strictly monotonically decreasing with respect to $k$. The following result, which summarizes \cite[Thm.~3.1 and 3.2]{LiZhou:02}, attends to the convergence of the above process.

\begin{proposition} \label{pr:convergence1}
 Let $p$ be a peak selection of $\J$ (with respect to $L$), and suppose that $\J$ satisfies the Palais--Smale condition~{\rm (PS)}. If 
 \begin{enumerate}[\rm (a)]
  \item $p$ is continuous,
  \item $\inf_{u\in L}\norm{w^k-u}_{\H} \geq \alpha$ for all $k=0,1,2,\dotsc$, for some constant $\alpha>0$, and
  \item $\inf_{v \in \sL} \J(p(v)) > -\infty$, 
 \end{enumerate}
then $\{w^k\}_{k}$ has a converging subsequence. Moreover, any such convergent subsequence tends to a critical point of $\J$. 
\end{proposition}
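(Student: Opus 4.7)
The plan is to combine the monotonic decrease of $\{\J(w^k)\}_k$ with the Palais--Smale condition {\rm (PS)}. I would proceed in three stages.

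First, I would establish convergence of the energy values. By construction of the iteration (step (ii), using~\eqref{eq:stepsize} together with the step-size selection $s^k\in(0,\sigma_k)$), the sequence $\{\J(w^k)\}_k$ is strictly decreasing. Assumption~(c) provides the uniform lower bound $\J(w^k)\geq\inf_{v\in\sL}\J(p(v))>-\infty$ for all $k$, so $\{\J(w^k)\}_k$ is bounded from below and therefore converges. In particular, $\J(w^k)-\J(w^{k+1})\to 0$ as $k\to\infty$.

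Second, and this is the main obstacle of the proof, I would show that $\norm{\J'(w^k)}_{{\H}^\star}\to 0$. Arguing by contradiction, assume there exist $\delta>0$ and a subsequence with $\norm{\J'(w^{k_j})}_{{\H}^\star}>\delta$. Applying the descent inequality~\eqref{eq:stepsize} at $w^{k_j}$ with this $\delta$ yields
\[
\J(w^{k_j+1})-\J(w^{k_j})<-\alpha\delta\,\norm{v^{k_j+1}-v^{k_j}}_{{\H}},
\]
and the first step forces the left-hand side to zero, so $\norm{v^{k_j+1}-v^{k_j}}_{{\H}}\to 0$ along the subsequence. The task is then to leverage the explicit form~\eqref{eq:v(s)} of $v(s)$, together with continuity of $p$ (assumption~(a)) and the fact that $d^k$ is the Riesz representative of $-\J'(w^k)$, in order to bound $\norm{v(s^k)-v^k}_{{\H}}$ from below by a quantity involving $s^k$ and $\norm{\J'(w^k)}_{{\H}^\star}$. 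Combined with the step-size rule from~\cite{LiZhou:02}, which prevents $s^k$ from shrinking too fast while the derivatives remain bounded below, this would contradict $\norm{v^{k_j+1}-v^{k_j}}_{{\H}}\to 0$. The delicate point is to exclude a degenerate situation where $s^k\to 0$ faster than $\norm{\J'(w^k)}_{{\H}^\star}$, and the modified step-size rule of~\cite{LiZhou:02} is specifically designed for this purpose.

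Third, once $\norm{\J'(w^k)}_{{\H}^\star}\to 0$ and $\{\J(w^k)\}_k$ is bounded, {\rm (PS)} directly yields a convergent subsequence $w^{k_j}\to w^\star$ in ${\H}$. Since $\J\in\C^1({\H};\mathbb{R})$, its derivative $\J'$ is continuous, and hence $\J'(w^\star)=\lim_j\J'(w^{k_j})=0$, so $w^\star$ is a critical point of $\J$. The same reasoning applies to any other convergent subsequence of $\{w^k\}_k$, which completes the argument.
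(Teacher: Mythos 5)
The paper does not give its own proof of Proposition~\ref{pr:convergence1}: it is introduced explicitly as a summary of \cite[Thms.~3.1 and 3.2]{LiZhou:02}, and the proof is delegated to that reference. Your plan reproduces the broad structure of the Li--Zhou argument, so let me assess whether it actually closes.

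The first and third stages are fine. Monotonicity from the step-size rule, together with assumption~(c), gives boundedness and hence convergence of $\{\J(w^k)\}_k$; and once $\norm{\J'(w^k)}_{\H^\star}\to 0$ is in hand, {\rm (PS)} yields a convergent subsequence, and continuity of $\J'$ (from $\J\in\C^1$) identifies the limit as a critical point. The problem is the second stage, which you yourself call ``the main obstacle.'' After deducing $\norm{v^{k_j+1}-v^{k_j}}_{\H}\to 0$, you write that ``the task is then to leverage \eqref{eq:v(s)}\ldots in order to bound $\norm{v(s^k)-v^k}_{\H}$ from below'' and that ``the delicate point is to exclude a degenerate situation where $s^k\to 0$ faster than $\norm{\J'(w^k)}_{\H^\star}$.'' These sentences announce what must be proved rather than proving it, and this is precisely the technical heart of \cite[Thm.~3.2]{LiZhou:02}. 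The backtracking rule \eqref{eq:sk}--\eqref{eq:Jdec} offers no \emph{a priori} lower bound on $s^k$: one has to show that the threshold $\sigma_0$ appearing in \eqref{eq:stepsize} stays bounded away from zero along the offending subsequence. That requires uniform control on the local behaviour of $\J$ and of the peak selection $p$ along $\{w^{k_j}\}_j$, which is exactly where continuity of $p$ (assumption~(a)) and the compactness supplied by {\rm (PS)} enter; neither appears in your sketch of this step. Appealing to the step-size rule being ``specifically designed for this purpose'' is a description of intent, not an argument. As written, the contradiction is not reached, so the proof is incomplete at its crux.

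A secondary remark: once one has lower bounds on $s^{k_j}$ and on $\norm{d^{k_j}}_\H$, the contradiction is cleanest if one concludes that the per-step decrease $\J(w^{k_j})-\J(w^{k_j+1})$ is bounded below by a fixed positive constant; telescoping then contradicts $\inf_k\J(w^k)>-\infty$ from assumption~(c) directly, without passing through the intermediate observation that $\norm{v^{k_j+1}-v^{k_j}}_{\H}\to 0$.
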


For the above iterative scheme, this theorem asserts that we can find $k^\star$ large enough  such that $\norm{\J'(w^{k^\star})}_{{\H}^\star}$ is sufficiently small. We then let $w_n:=w^{k^\star}$, and restart the search for a new critical point.

\begin{remark}
It is sensible to choose $v^0\in\sL$ in~\eqref{eq:ascent1} to be an ascent direction of $\J$ at $w_{n-1}$, i.e.
\begin{equation}\label{eq:ascent}
\J(w_{n-1}+tv^0)>\J(w_{n-1}),
\end{equation}
for any $t>0$ sufficiently small. Indeed, moving along an ascent direction works in favour of assumption (b) in Proposition~\ref{pr:convergence1} (see also Theorem~\ref{thm:fullconvergence} below).
\end{remark}

\section{An adaptive local minimax Galerkin method} \label{sc:LMMG}

The purpose of this section is to provide a practical scheme for the approximation of the nonlinear equation~\eqref{eq:F=0}. To this end, we employ a sequence of finite-dimensional Galerkin subspaces $\X_N\subset {\H}$, $N\ge 0$, with the hierarchical property $\X_0\subset \X_1\subset\ldots\subset {\H}$. In order to deal with the nonlinearity of~$\F$, based on a suitable a posteriori error analysis, we introduce an adaptive interplay between the minimax scheme from \S\ref{sc:minimax} and the Galerkin discretizations. We thereby obtain an \emph{iterative minimax Galerkin method}. 

\subsection{Galerkin discretization}

For $N\ge 0$, the Galerkin discretization of~\eqref{eq:J'=0} is to find discrete approximations $w_N\in \X_N$ such that
\[
\R_N(w_N)=0\qquad\text{in }\X_N,
\] 
where, for $w\in {\H}$, we define the discrete residual $\R_N(w)\in \X_N$ by
\begin{equation}\label{eq:RN}
(\R_N(w),v)_{\H}:=\dprod{\J'(w),v}\qquad\forall v\in \X_N.
\end{equation}
Denoting by $\{w_N^k\}_k\subset\X_N$ the sequence generated by the local minimax procedure from \S\ref{sc:minimax} on a given Galerkin space $\X_N$, it holds that
\begin{equation}\label{eq:R0}
\norm{\R_N(w_N^k)}_{\H}
=\sup_{\genfrac{}{}{0pt}{}{v\in \X_N}{\norm{v}_{\H}=1}}\frac{\dprod{\J'(w_N^k),v}}{\norm{v}_{\H}}
=:\norm{\J'(w_N^k)}_{\X_N^\star}
 \to 0,
\end{equation}
for $k\to\infty$; see \cite[Thm.~4.1]{Yao:15}. Hence, if we consider a given positive function $\sigma:\mathbb{N}_0 \to (0,\infty)$ with
\[
 \sigma(N) \to 0 \qquad \text{for} \qquad N \to \infty,
\]
then, for any $N \in \mathbb{N}_0$, there is $k_N\in\mathbb{N}$ such that 
\begin{equation}\label{eq:RN00}
\norm{\R_N(w_N^k)}_{{\H}} < \sigma(N),
\end{equation} 
for all $k\ge k_N$. 

\subsection{Adaptive local minimax Galerkin procedure}

The main idea of the Algorithm~\ref{alg:LMMG} in the current work is to provide an adaptive interplay between the following two strategies:
\begin{enumerate}[(I)]
\item \emph{Mountain pass approximation:} On a given Galerkin space $\X_N$, for $N\ge 0$, we run the local minimax procedure from \S\ref{sc:minimax} on $\X_N$ until the resulting approximations, $w_N^k\in \X_N$, $k\ge 0$, are sufficiently close to a zero of $\R_N$. Let us denote by $w_N^{k^\star}$ the final approximation on the present Galerkin space $\X_N$. From the point of view of computational complexity, we note that the core part of the minimax Galerkin discretization is the solution of the \emph{linear} problem~\eqref{eq:RN}, together with the application of the peak selection in each iteration step.
\item \emph{Adaptive Galerkin discretization:} Once the norm of the residual obtained from step~(I), i.e. $\norm{\R_N(w_N^{k^\star})}_{\H}$, is small enough, we enrich the Galerkin space $\X_N$ appropriately. This is based on the assumption that we have at our disposal a computable \emph{error indicator} $\eta_N:\,\X_N\to[0,\infty)$ such that
\begin{equation}\label{eq:I1}
\norm{\J'(v)}_{{\H}^\star} \leq C \eta_N(v)\qquad\forall v\in \X_N,
\end{equation}
for some constant $C>0$ (independent of $\X_N$); here, the dual norm is defined by
\begin{equation}\label{eq:dualnorm}
\norm{\phi}_{{\H}^\star}:=\sup_{y\in S_{\X}}\dprod{\phi,y}. 
\end{equation}
Furthermore, we suppose that $\eta_N(v)$ comprises of \emph{local} error contributions which allow to refine the Galerkin space $\X_N$ effectively. 
\end{enumerate}

This iterative Galerkin scheme is outlined in Algorithm~\ref{alg:LMMG}.

\begin{algorithm} 
\caption{Adaptive local minimax Galerkin (LMMG) algorithm}
\label{alg:LMMG}
\begin{algorithmic}[1]
\State {Input $n-1$ previously found critical points $w_{1},\dotsc,w_{n-1}$ of $\J$ as in~\eqref{eq:w1n}. }
\State {Prescribe a steering parameter $\gamma>0$, a step size control $\lambda>0$, and a tolerance $\epsilon_{\mathrm{tol}}>0$.}
\State {Start with an initial Galerkin space $\X_0 \subset {\H}$, and set $N=0$.}
\State {Let $L_0:=\mathrm{span}\{w_{{0,1}}, \ldots, w_{0,n-1}\}\subset\X_0$, with suitable approximations (e.g. a nodal interpolant in the context of finite element discretizations) $w_{0,1},\ldots,w_{0,n-1}$ of $w_0,\ldots,w_{n-1}$ in $\X_0$, respectively.}
\State {Choose $v_0^0 \in S_{L_0^\perp} \cap \X_0$ to be an (ascent) direction at $w_{0,n-1}$, cf.~\eqref{eq:ascent}.}
\Repeat
\myState {Set $k=0$.}
\myState{Use a peak selection $p_N$ on $\X_N$, and determine $w_N^0 =p_N(v_N^0)=t_N^0v_N^0+u_N^0$.}
\While {$\norm{\R_N(w_N^k)}_{{\H}} > \gamma \eta_N(w_N^k)$ or $\norm{\R_N(w_N^k)}_{{\H}} > \sigma(N)$}
\parState{Compute the steepest descent direction of $\J$ at the point $w_N^k$, i.e. solve the linear discrete problem~\eqref{eq:RN} to define
\begin{equation}\label{eq:sd}
d_N^k:=-\R_N(w_N^k).
\end{equation}}
\parState {Set
\begin{align}\label{eq:ud}
 v_N^k(s)=\frac{v_N^k+sd_N^k}{\norm{v_N^k+sd_N^k}_{{\H}}}.
\end{align}
}
\parState {Compute $s_N^k$ from~\eqref{eq:sk}.}
\parState {Set $v_N^{k+1}=v_N^k(s_N^k)$, and determine $w_N^{k+1}=p_N(v_N^{k+1})=t_N^{k+1}v_N^{k+1}+u_N^{k+1}$, for unique $u_N^{k+1}\in L_N$ and $t_N^{k+1}\ge 0$, cf.~\eqref{eq:w1}.}
\parState {Update $k\gets k+1$.}
\EndWhile
\myState {Enrich the Galerkin space $\X_N$ appropriately using the local error indicator $\eta_N(w_N^k)$.}
\myState {Define $w_{N+1}^0 := w_N^k$ and $v_{N+1}^0 := v_N^k$ by inclusion $\X_{N+1} \hookleftarrow \X_N$.}
\myState {Update $N \gets N+1$.}
\myState{Let $L_N=\mathrm{span} \{w_{{N,1}}, \dotsc, w_{N,n-1}\} \subset \X_N$, where $w_{N,i}$ signifies a suitable approximation (cf.~line 4) of $w_i$ in $\X_N$, for $1\le i\le n-1$.}
\Until {$\norm{\R_N(w_N^{k})}_{\H} < \epsilon_{\mathrm{tol}}$.}
\end{algorithmic}
\end{algorithm}

\begin{remark}%\label{rem:algorithm}
We comment on two aspects of Algorithm~\ref{alg:LMMG}.
\begin{enumerate}[(a)]
\item The stopping criterion for the iteration is expressed in terms of the inequality
\begin{equation}\label{eq:stop}
\norm{\R_N(w_N^{k^\star})}_{{\H}}\le
\min\left(\gamma\eta_N(w_N^{k^\star}),\sigma(N)\right),
\end{equation}
where $\gamma>0$ is a prescribed method parameter. In contrast to the expression $\norm{\E'(w_N^{k^\star})}_{\X^\star}$, cf.~\eqref{eq:I1}, the quantity $\norm{\R_N(w_N^{k^\star})}_{{\H}}$ is computable in practice. The second condition from the stopping criterion~\eqref{eq:stop}, i.e. $\norm{\R_N(w_N^k)}_{{\H}} \leq \sigma(N)$, was introduced for the rather theoretical purpose of Theorem~\ref{thm:fullconvergence} below. Indeed, running the experiments in \S\ref{sec:ex} without this stopping criterion seems to yield the same solutions, with the same asymptotic convergence behaviour and comparable number of iteration steps. Yet, if $\sigma(N)$ decays too fast for increasing $N$, then unnecessary iteration steps are performed (which, desirably, should be prevented). 
\item Following~\cite[p.~870]{LiZhou:02}, the step size for the update of the ascent direction from~\eqref{eq:v(s)}, applied on the Galerkin space $\X_N$, is defined by
\begin{equation}\label{eq:sk}
s^k_N=\frac{\lambda}{2^{m}},
\end{equation}
where $\lambda>0$ is a step size control parameter, and $m\in\mathbb{Z}$ is the minimal integer such that
\[
2^m > \norm{d_N^k}_{{\H}},
\]
and
\begin{equation}\label{eq:Jdec}
\J\left(p\left(v_N^k\left(\nicefrac{\lambda}{2^m}\right)\right)\right)-\J(w_N^k) \leq -\frac12t_N^k\norm{d_N^k}_{\X}\norm{v_N^k\left(\nicefrac{\lambda}{2^m}\right)-v_N^k}_{{\H}}.
\end{equation}
Here, $d^k_N$ and $v^k_N(s)$ are the steepest descent direction and updated ascent direction from~\eqref{eq:sd} and~\eqref{eq:ud}, respectively. We observe that the estimate~\eqref{eq:Jdec} follows from~\eqref{eq:stepsize} upon letting
\begin{align*}
\alpha&:=\frac{1}{\sqrt{2}}t_N^k<t_N^k=\inf_{u\in L}\norm{w_N^k-u}_{\H},
\intertext{and}
\delta&:=\frac{1}{\sqrt{2}} \norm{d_N^k}_\X < \norm{d_N^k}_\X \leq \norm{\E'(w_N^k)}_{\X^\star},
\end{align*}
cf.~Proposition~\ref{pr:LZ}~(b) and~\eqref{eq:deltaE'}, respectively, whenever $m$ is chosen large enough.
\end{enumerate}
\end{remark}

\begin{theorem} \label{thm:fullconvergence}
Suppose that the functional $\J$ satisfies the Palais--Smale condition~{\rm (PS)} on $\X$, and let $\{\X_N\}_{N\ge0}$ be the sequence of finite dimensional subspaces of $\X$ generated by running Algorithm~\ref{alg:LMMG} (for $\epsilon_{\mathrm{tol}}=0$). Furthermore, for every $N\ge0$, consider the closed subspace $L_N \subset \X_N$ from line 19, and assume that the peak selection $p_N$ of $\J$ with respect to $L_N$ fulfils the following properties:
 \begin{enumerate}[\rm (a)]
  \item $p_N$ is continuous,
  \item $\inf_{y\in L_N}\norm{w_N^k-y}_{\H} \geq \alpha_N >0$ for all $k\ge0$, and
  \item $\inf_{N \geq 0}\inf_{v \in S_{L_N^\perp} \cap \X_N} \J(p_N(v)) > -\infty$.
 \end{enumerate}
Then, the following two convergence properties hold true:
\begin{enumerate}[\rm(I)]
 \item If there exists an index $N \geq 0$ such that the while loop (line 9--line 15) in Algorithm~\ref{alg:LMMG} does not terminate, and if $\E':\,\X\to\X^\star$ is continuous on the subspace $\X_N$, then the generated sequence $\{w_N^k\}_{k \geq 0}\subset\X_N$ converges to the set of critical points of $\J$, given by 
 \begin{equation}\label{eq:CJ}
C_{\J}:=\{w \in {\H}: \J'(w)=0 \ \text{in } {\H}^\star\},
\end{equation} 
in the sense that $\lim_{k\to\infty}\inf_{y\in C_{\J}}\norm{w_N^{k}-y}_{\H} =0$.
% \item Otherwise, for each $N\ge 0$, suppose that there exists a finite integer $k^\star=k^\star(N)\in\mathbb{N}$ such that~\eqref{eq:stop} is satisfied. Furthermore, for
%any infinite subset $J\subset\mathbb{N}$, assume that the corresponding sequence $\{\E'(w_{N}^{k^\star})\}_{N\in J}$ has a converging subsequence in the space $\X_\infty:=\overline{\bigcup_{N \geq 0} \X_N}$.
%% and assume that the functional $\J_\infty:=\J|_{\X_\infty}$ satisfies the Palais--Smale condition~{\rm (PS)} on $\X_\infty$. 
%% , i.e. there exists $\beta>0$ such that 
%%\begin{align} \label{eq:boundedassumption}
%% \norm{w_N^{k^\star}}_{{\H}} \leq \beta \qquad \forall N \geq 0, 
%%\end{align}
%Then, we have
%$
%\inf_{y\in C_{\J,\infty}}\norm{w_N^{k^\star}-y}_{\H} \to 0,
%$
%as $N\to\infty$, where $C_{\J,\infty}:=\{w \in \X_\infty: \J'(w)=0 \ \text{in } \X_\infty^\star\}$.
 \item Otherwise, for each $N\ge 0$, suppose that there exists a finite integer $k^\star=k^\star(N)\in\mathbb{N}$ such that~\eqref{eq:stop} is satisfied. Moreover, let the sequence of Galerkin spaces $\{\X_N\}_{N\ge0}$ be dense in $\X$, i.e. $\X=\overline{\bigcup_{N \geq 0} \X_N}$. Furthermore, for any infinite subset $J\subseteq \mathbb{N}$, assume that the corresponding sequence $\{\E'(w_{N}^{k^\star})\}_{N\in J}$ has a converging subsequence in $\X$.
% and assume that the functional $\J_\infty:=\J|_{\X_\infty}$ satisfies the Palais--Smale condition~{\rm (PS)} on $\X_\infty$. 
% , i.e. there exists $\beta>0$ such that 
%\begin{align} \label{eq:boundedassumption}
% \norm{w_N^{k^\star}}_{{\H}} \leq \beta \qquad \forall N \geq 0, 
%\end{align}
Then, we have
$
\inf_{y\in C_{\J}}\norm{w_N^{k^\star}-y}_{\H} \to 0,
$
as $N\to\infty$, with $C_{\J}$ from~\eqref{eq:CJ}.
\end{enumerate}
\end{theorem}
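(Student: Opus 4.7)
The plan is to treat the two cases separately. For part~(I), the sequence $\{w_N^k\}_k$ lies entirely in the finite-dimensional Galerkin space $\X_N$, and the minimax iteration from \S\ref{sc:minimax} is executed there with $L_N$ playing the role of $L$. Hypotheses (a)--(c) of Proposition~\ref{pr:convergence1} hold by assumption (a (PS)-type property on $\X_N$ being automatic in finite dimensions; cf.\ also the discrete analysis of~\cite{Yao:15}), so every subsequence of $\{w_N^k\}_k$ admits a sub-subsequence $w_N^{k_j}\to w^\star\in\X_N$ with $\R_N(w^\star)=0$, and continuity of $\R_N$ on $\X_N$ gives $\norm{\R_N(w_N^{k_j})}_\H\to 0$. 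Since the while loop is assumed not to terminate and $\sigma(N)>0$ is a fixed positive number, for all large $j$ one has $\norm{\R_N(w_N^{k_j})}_\H<\sigma(N)$, so the violated stopping condition must be $\norm{\R_N(w_N^{k_j})}_\H>\gamma\eta_N(w_N^{k_j})$. This forces $\eta_N(w_N^{k_j})\to 0$, and~\eqref{eq:I1} yields $\J'(w_N^{k_j})\to 0$ in $\H^\star$. The continuity of $\J'$ on $\X_N$ then gives $\J'(w^\star)=0$, i.e.\ $w^\star\in C_\J$, and a standard subsequence-contradiction argument upgrades this to $\lim_{k\to\infty}\inf_{y\in C_\J}\norm{w_N^k-y}_\H=0$.

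For part~(II), the hierarchical initialization $w_{N+1}^0=w_N^{k^\star(N)}$ together with the inner monotonicity~\eqref{eq:Jdec} gives $\J(w_{N+1}^{k^\star(N+1)})\le\J(w_N^{k^\star(N)})$, so assumption~(c) (whose infimum is taken over all $N$) bounds $\{\J(w_N^{k^\star(N)})\}_N$ in $\mathbb{R}$. Moreover, the stopping criterion~\eqref{eq:stop} yields $\norm{\R_N(w_N^{k^\star})}_\H\le\sigma(N)\to 0$. To pass from the projected residual $\R_N$ to $\J'$, I would test $\J'(w_N^{k^\star})$ against an arbitrary $v\in\H$: using density of $\bigcup_{N\ge 0}\X_N$ in $\H$, pick $v_N\in\X_N$ with $v_N\to v$, and split
\[
\dprod{\J'(w_N^{k^\star}),v}
=\dprod{\J'(w_N^{k^\star}),v-v_N}+(\R_N(w_N^{k^\star}),v_N)_\H.
\]
Along a subsequence $\{N_j\}$ on which the standing hypothesis provides a convergent (hence bounded) sequence $\{\J'(w_{N_j}^{k^\star})\}$ in $\H^\star$, the first term is dominated by $\norm{\J'(w_{N_j}^{k^\star})}_{\H^\star}\norm{v-v_{N_j}}_\H\to 0$ and the second by $\sigma(N_j)\norm{v_{N_j}}_\H\to 0$. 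Hence the $\H^\star$-limit of $\{\J'(w_{N_j}^{k^\star})\}$ vanishes on the dense set $\H$, so it is identically zero.

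With $\J'(w_{N_j}^{k^\star})\to 0$ in $\H^\star$ and bounded energies, (PS) applies: a further subsubsequence converges strongly in $\H$ to some $w^\star$, and the continuity of $\J'$ (from $\J\in\C^1(\H;\mathbb{R})$) yields $\J'(w^\star)=0$, i.e.\ $w^\star\in C_\J$; the usual ``every subsequence admits a sub-subsequence tending to $C_\J$'' argument then implies $\inf_{y\in C_\J}\norm{w_N^{k^\star}-y}_\H\to 0$. The main obstacle throughout is the middle step of part~(II): the algorithm only drives the projected residual $\R_N$ (not $\J'$ itself) to zero, and the indicator bound~\eqref{eq:I1} points the wrong way to conclude $\eta_N\to 0$ from the stopping criterion. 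Overcoming this is precisely what forces the density of $\bigcup_N\X_N$ and the precompactness of $\{\J'(w_N^{k^\star})\}$ in $\H^\star$ into the hypotheses, the former supplying the weak testing identity above and the latter the compactness needed to invoke (PS).
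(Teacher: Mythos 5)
Your overall structure mirrors the paper's: for~(I), reduce to $\J'(w_N^k)\to 0$ on the fixed space $\X_N$ and apply (PS); for~(II), use the residual bound $\norm{\R_N(w_N^{k^\star})}_\H\le\sigma(N)$ together with density of $\bigcup_N\X_N$ to conclude $\J'(w_N^{k^\star})\to 0$ in $\H^\star$, then again invoke (PS). For the density step in~(II) your argument is in fact cleaner than the paper's: rather than fixing $\varepsilon$, $\gamma$ and chaining several triangle-inequality estimates to show $\norm{\g_\infty}_{\X^\star}<\delta$, you directly exhibit $\g_\infty(v)=0$ for every $v$ via the identity $\dprod{\J'(w_N^{k^\star}),v}=\dprod{\J'(w_N^{k^\star}),v-v_N}+(\R_N(w_N^{k^\star}),v_N)_\H$, which isolates exactly where density and the $\sigma(N)$-decay enter.

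One step warrants scrutiny: your justification of the uniform boundedness of $\{\J(w_N^{k^\star(N)})\}_N$. You claim $w_{N+1}^0=w_N^{k^\star(N)}$ and from this deduce cross-mesh decrease of the energies via~\eqref{eq:Jdec}. But when the Repeat loop returns to line~8, Algorithm~\ref{alg:LMMG} \emph{re-applies} the peak selection on the enriched space and sets $w_{N+1}^0=p_{N+1}(v_{N+1}^0)$, overriding the preliminary assignment from line~17. Since $p_{N+1}(v_{N+1}^0)$ is a local maximiser of $\J$ on $L_{v_{N+1}^0}\cap\X_{N+1}\supset L_{v_{N+1}^0}\cap\X_N$ near $w_N^{k^\star}$, one generally has $\J(w_{N+1}^0)\ge\J(w_N^{k^\star})$ rather than equality, so the claimed monotone upper bound does not follow. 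Your claim is correct in the special case $L_N\equiv\{0\}$ with a uniquely defined peak selection (as in Proposition~\ref{pr:app}), because the ray $\{tv:\,t\ge0\}$ is the same in $\X_N$ and $\X_{N+1}$; but the theorem is stated for general $L_N$. To be fair, the paper defers the boundedness in~(II) to ``as in~(I)'' without a full cross-$N$ argument, so the gap is shared, but your version of the claim is explicitly not quite correct as written and should be qualified accordingly.
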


\begin{proof}[Proof of {\rm(I)}]
If, for some $N\ge0$, the while loop does not terminate in finitely many steps, i.e. \eqref{eq:stop} is never satisfied, then, taking into account~\eqref{eq:RN00}, we conclude that
$
\gamma\eta_N(w_N^k)\le\norm{\R_N(w_N^k)}_{{\H}}
$
for $k$ sufficiently large. Hence, due to~\eqref{eq:I1} and~\eqref{eq:R0}, it follows that
\[
\norm{\J'(w_N^k)}_{{\H}^\star} \leq C \eta_N(w_N^k) \leq \frac{C}{\gamma} \norm{\R_N(w_N^k)}_{{\H}} \to 0 \quad \text{for} \quad k \to \infty.
\] 
We will now show that this, in turn, yields 
\[d^k_N:=\inf_{y\in C_{\J}}\norm{w_N^{k}-y}_{\H} \to 0,\] for $k \to \infty$. Assume this were false. Then, there would exist some $\delta >0$ and a subsequence $\{w_{N}^{k_j}\}_j$ with $d^{k_j}_{N} \geq \delta$ for all $k_j \geq 0$. Owing to assumption (c) and the monotonicity property~\eqref{eq:Jdec}, we notice that the sequence $\{\J(w_{N}^{k_j})\}_j$ is bounded. Furthermore, we have $\J'(w_{N}^{k_j}) \to 0$ in $\X^\star$ for $j \to \infty$. Then, exploiting the Palais--Smale compactness condition~(PS) for $\E$, there is a convergent subsequence $w_{N}^{k_{j_i}}$, with a limit $w_N^\star\in\X_N$. By continuity of $\E'$ on $\X_N$, it holds that $\J'(w_N^\star)=0$. This means that $w_N^\star$ is a critical point of $\J$, i.e. $d^{k_{j_i}}_N\to0$ for $j\to\infty$, which constitutes a contradiction.
\end{proof}

\begin{proof}[Proof of {\rm(II)}]
For any $N\ge 0$, define $\g_N:=\J'(w_N^{k^\star}) \in {\H}^\star$, where $w^{k^\star}_N$ is the final approximation on the Galerkin space $\X_N$. We aim to show that $\g_N \to 0$ in ${\H}^\star$ for $N \to \infty$; the remainder of the proof then follows as in ~(I). Suppose to the contrary that there exists $\delta>0$ and an infinite subset $J\subset\mathbb{N}$ such that $\norm{\g_{N}}_{{\H}^\star} \geq \delta$ for all $N\in J$. By assumption,
%Since $\{w_{N_\ell}^{k^\star}\}_\ell$ is bounded and $\E':\X_\infty \to \X_\infty^\star$ compact, 
there exists a subsequence $\{\g_{N_{\ell}}\}_{\ell}$, with $\{N_\ell\}_\ell\subset J$, which converges to some limit $\g_\infty \in \X^\star$:
\begin{equation}\label{eq:aux20200902}
\norm{\g_\infty-\g_{N_{\ell}}}_{\X^\star} \to 0\quad\text{ for }\quad\ell \to \infty.
\end{equation}
Set $\gamma:=\norm{\g_\infty}_{\X^\star}\ge\delta$, and
%We will now show that $\norm{\g_\infty}_{\X_\infty^\star} < \delta$, which yields the desired contradiction. 
let $\varepsilon:=\nicefrac{1}{4}\min\{\delta,\delta\gamma^{-1}\}\le \nicefrac14$. Pick any $x \in \X$ with $\norm{x}_\X=1$.
% and let $0<\varepsilon<1$ be arbitrary. 
By density, there exists $x_\varepsilon \in \bigcup_{N \geq 0} \X_N$ such that $\norm{x-x_\varepsilon}_\X \leq \varepsilon$, and, in particular, $x_\varepsilon \in \X_{N}$ for any $N$ large enough. Therefore, applying the triangle inequality, and exploiting the linearity of the involved operators, we find
\begin{equation*} %\label{eq:epsilonbound1}
\begin{split}
|\g_\infty(x)| &\leq |\g_\infty(x-x_\varepsilon)|+|\g_\infty(x_\varepsilon)-\g_{N_{\ell}}(x_\varepsilon)|+|\g_{N_{\ell}}(x_\varepsilon)|   \\
& \leq \varepsilon\norm{\g_\infty}_{\X^\star}  + (1+\varepsilon)\Big(\norm{\g_\infty-\g_{N_{\ell}}}_{\X^\star}+\norm{\g_{N_{\ell}}}_{\X_{N_{\ell}}^\star}\Big),
\end{split}
\end{equation*}
for $\ell$ large enough. For the last term, recalling~\eqref{eq:R0}, we obtain
\[
\norm{\g_{N_{\ell}}}_{\X_{N_{\ell}}^\star}=\norm{\R_{N_{\ell}}(w_{N_{\ell}}^{k^\star})}_{\H}
\leq \sigma(N_{\ell}) \to 0 \quad \text{for} \quad \ell \to \infty.
\]
Therefore, invoking~\eqref{eq:aux20200902}, for sufficient large $\ell$, it holds that 
$
\norm{\g_\infty-\g_{N_{\ell}}}_{\X^\star}+\norm{\g_{N_{\ell}}}_{\X_{N_{\ell}}^\star} \leq \nicefrac{\varepsilon}{2}.
$ 
%Moreover, since $\{w^{k^\star}_N\}_N \subset \X_\infty$ is bounded and $\E':\X_\infty \to \X_\infty^\star$ compact (and thus continuous), there exists $\gamma>0$ such that $\norm{g_N}_{\X_\infty^\star} \leq \gamma$ for all $N \geq 0$, and, in turn,
%\begin{align} \label{eq:epsilonbound3}
%\norm{\g_\infty}_{\X_\infty^\star} \leq \gamma.
%\end{align}
In summary, by our choice of $\varepsilon$, 
%In particular, for $0<\varepsilon< \nicefrac{1}{4} \min\left\{\delta,\delta \gamma^{-1},1\right\}$ and $N_{\ell_j}$ large enough, we find by 
%combining~\eqref{eq:epsilonbound1} and \eqref{eq:epsilonbound2}
%, and~\eqref{eq:epsilonbound3} that 
we infer that $|\g_\infty(x)| \leq \gamma \varepsilon + \varepsilon \le \nicefrac{\delta}{2}$. Thus, since $x \in \X$ was arbitrary, we deduce that $\norm{\g_\infty}_{\X^\star} < \delta$ which yields a contradiction.
\end{proof}

\begin{remark} \label{rem:convergence}
We add a few comments on part~{\rm(II)} of Theorem~\ref{thm:fullconvergence} above.
\begin{enumerate}[(i)]
\item From the proof of (II) we deduce that $\J'(w^{k^\star}_N)\to0$ in $\X^\star$ for $N\to\infty$. Hence, from boundedness of $\{\E(w^{k^\star}_N)\}_{N\ge0}$, cf.~part~(I), and owing to the Palais-Smale condition~{\rm(PS)}, we infer that the sequence $\{w_N^{k^\star}\}_{N\ge0}$ has a convergent subsequence in $\X$. In addition, any convergent subsequence tends to some critical point in $C_\E$.
\item If $\E':\,\X \to \X^\star$ is a (nonlinear) compact mapping, and the sequence $\{w_{N}^{k^\star}\}_{N\ge0}$ is bounded in $\X^\star$, then any subsequence of $\{\E'(w_{N}^{k^\star})\}_{N\ge0}$ possesses a converging subsequence, as required in part~{\rm(II)}.
\item If the sequence $\{\E'(w_N^{k^\star})\}_N \subset \X^\star$ is merely bounded, without presuming the existence of converging subsequences, it still holds that $\{\E'(w_N^{k^\star})\}_N$ converges weakly to 0. This can be derived from the proof of Theorem~\ref{thm:fullconvergence} upon replacing the strong topology by the weak topology. Moreover, if we make the uniform approximation assumption that
\[
 \sup_{u \in C_{\E}}\inf_{v_N \in \X_N} \norm{u-v_N}_{\H} \to 0 \qquad \text{as} \qquad N \to \infty,
\]
then it follows 
\begin{equation*}
d_N:=\inf_{y\in C_{\J}}\norm{w_N^{k^\star}-y}_{\H} \to 0 \qquad \text{as} \qquad N \to \infty,
\end{equation*}
where $C_\E$ denotes the set of critical points of $\E$ from~\eqref{eq:CJ}.
\item In the context of finite element methods, the space $\bigcup_{N\ge0}\X_N$ is typically dense in $\X$ whenever the mesh size (i.e. the maximal element diameter) tends to zero; cf., e.g., \cite[Thm.~3.2.3]{CiarletBOOK}.
\end{enumerate}
\end{remark}

\begin{cor}
Given the assumptions of Theorem~\ref{thm:fullconvergence}~{\rm(II)}. If there exists a constant $\alpha>0$ such that $\alpha_N \geq \alpha>0$ uniformly for all $N \geq 0$, and $w_{N,i} \to w_i$ for $N \to \infty$, $i\in\{1,\dots,n-1\}$, then any limit point $w^\star$, as mentioned in Remark~\ref{rem:convergence}~{\rm(i)}, is a \emph{new} critical point of $\E$, i.e. $w^\star \neq w_i$ for each $i \in \{1,\dotsc,n-1\}$.
\end{cor}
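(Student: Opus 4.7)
The plan is to unwrap what it means for $w^\star$ to be a limit point and then exploit the uniform separation bound $\alpha_N\ge\alpha$ together with the assumed convergence $w_{N,i}\to w_i$. By Remark~\ref{rem:convergence}~{\rm(i)}, $w^\star$ arises as the limit of some subsequence $\{w_{N_\ell}^{k^\star(N_\ell)}\}_\ell$ of the approximations produced by Algorithm~\ref{alg:LMMG}. The only ingredient that pins these approximations away from previously found critical points is assumption~(b) of Theorem~\ref{thm:fullconvergence}; the corollary's hypothesis makes this separation uniform in $N$, and the approximation hypothesis $w_{N,i}\to w_i$ transfers the separation to the actual $w_i$ in the limit.

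First I would fix any $i\in\{1,\dots,n-1\}$. Since $w_{N,i}\in L_N$ by the definition of $L_N$ on line 19 of Algorithm~\ref{alg:LMMG}, assumption~(b) of Theorem~\ref{thm:fullconvergence}~{\rm(II)} gives
\[
\norm{w_N^{k^\star}-w_{N,i}}_{\H}\ \ge\ \inf_{y\in L_N}\norm{w_N^{k^\star}-y}_{\H}\ \ge\ \alpha_N\ \ge\ \alpha
\]
for every $N\ge 0$. Then I would specialise this inequality to the indices $N=N_\ell$ of the convergent subsequence and pass to the limit $\ell\to\infty$: by the corollary's assumption $w_{N_\ell,i}\to w_i$ and by construction $w_{N_\ell}^{k^\star}\to w^\star$, so continuity of $\norm{\cdot}_{\H}$ yields
\[
\norm{w^\star-w_i}_{\H}\ \ge\ \alpha\ >\ 0.
\]
Since $i$ was arbitrary, $w^\star\neq w_i$ for all $i\in\{1,\dots,n-1\}$, which is exactly the claim; combined with Remark~\ref{rem:convergence}~{\rm(i)}, $w^\star$ is a critical point of $\E$ and hence a \emph{new} one.

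There is no real obstacle here; the argument is essentially a limiting triangle-inequality manipulation. The only subtle point worth double-checking is that the $w_{N,i}$ used in the separation bound (coming from line 19, i.e.\ the space $L_N$ on the present Galerkin level) agree with the approximations whose convergence $w_{N,i}\to w_i$ is assumed in the corollary, so that the two hypotheses can be combined in a single inequality; this is immediate from the construction.
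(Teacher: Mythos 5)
Your argument is correct and is essentially the same as the paper's proof: both hinge on the uniform lower bound $\|w_N^{k^\star}-w_{N,i}\|_{\X}\ge\alpha$ coming from assumption (b), the convergence $w_{N,i}\to w_i$, and a triangle-inequality passage to the limit along the convergent subsequence. The only (cosmetic) difference is that you argue directly, obtaining the quantitative conclusion $\|w^\star-w_i\|_{\X}\ge\alpha$, whereas the paper phrases it as a proof by contradiction starting from $w^\star=w_i$.
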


\begin{proof}
Suppose to the contrary that there exists a subsequence $\{w_{N_j}^{k^\star}\}_j$ with limit $w^\star=w_i$ for some $i \in \{1,\dotsc,n-1\}$. Then, for $N_j \geq 0$ large enough, it holds that 
\[
\norm{w_{N_j}^{k^\star}-w_i}_\X=\norm{w_{N_j}^{k^\star}-w^\star}_\X \leq \nicefrac{\alpha}{4}\quad\text{ and }\quad
\norm{w_i-w_{N_j,i}}_X \leq \nicefrac{\alpha}{4}.
\] 
Hence, applying the triangle inequality leads to
\[\norm{w_{N_j}^{k^\star}-w_{N_j,i}}_\X \leq \norm{w_{N_j}^{k^\star}-w_i}_\X + \norm{w_i-w_{N_j,i}}_X \leq \nicefrac{\alpha}{2}.
\]
Recalling that $\alpha_N\ge\alpha>0$ uniformly, this contradicts assumption (b) of Theorem~\ref{thm:fullconvergence}.
\end{proof}

\section{Application to semilinear elliptic PDE}\label{sc:SL}

We apply the adaptive LMMG Algorithm~\ref{alg:LMMG} in the context of finite element discretizations of semilinear elliptic Dirichlet boundary value problems of the form
\begin{subequations}\label{eq:pde}
\begin{alignat}{2}
 - \varepsilon \Delta u + qu &=f(\cdot,u)& \quad& \text{in } \Omega \label{eq:modelproblem}\\
 u&=0 &&\text{on } \partial \Omega; \label{eq:modelproblemboundary}
\end{alignat}
\end{subequations}
here, we assume that $\Omega \subset \mathbb{R}^n$ is a (Lebesgue-measurable) bounded open domain, and $\varepsilon>0$ is a singular perturbation parameter. Furthermore, $q \in \mathrm{L}^\infty(\Omega)$ satisfies the following condition: there are two constants $\nu \geq 0$ and $c_\nu \geq 0$, which do not depend on $\varepsilon$, such that 
\begin{align} \label{eq:qbound}
q \geq \nu \quad \text{in } \Omega \qquad \text{and} \qquad \norm{q}_{\L^\infty(\Omega)} \leq c_\nu \nu;
\end{align}
cp.~\cite[\S4.4, (A3)]{Verfurth:13}. Moreover, the right-hand side function $f$ satisfies the following standard conditions (see, e.g., \cite[p.~9]{Rabinowitz:86}):
\begin{itemize} 
 \item[(f1)] $f \in \C(\overline{\Omega} \times \mathbb{R};\mathbb{R})$;
 \item[(f2)] there are constants $a_1,a_2\ge 0$ such that
\[
  |f(x,t)| \leq a_1 + a_2 |t|^s,
\]
where $0 \leq s < \nicefrac{(n+2)}{(n-2)}$ if $n > 2$, and 
\[
 |f(x,t)| \leq a_1 \exp(\varphi(t)),
\]
where $\varphi(t)t^{-2} \to 0$ as $|t| \to \infty$, if $n=2$. 
\end{itemize}
In the one-dimensional case, $n=1$, we note that (f2) can be dropped.

\subsection{Existence of weak solutions}

Under the above conditions it can be shown that critical points of the functional $\E_\varepsilon:\,\H \to \mathbb{R}$, defined by
\begin{align} \label{eq:energyfunctional}
 \E_\varepsilon(u):= \frac{\varepsilon}{2} \int_\Omega |\nabla u(x)|^2 \dx+\frac{1}{2} \int_\Omega q(x) u(x)^2 \dx -\int_\Omega F(x,u(x)) \dx,
\end{align}
with 
\begin{equation}\label{eq:F}
F(x,t):=\int_0^t f(x,s) \ds
\end{equation} 
the anti-derivative of $f$, are weak solutions of \eqref{eq:modelproblem}--\eqref{eq:modelproblemboundary}
in the standard Sobolev space $\H:=\HS(\Omega)$; see, e.g., \cite[Prop.~B.10.]{Rabinowitz:86}. Moreover, the functional $\E_\varepsilon \in \C^1(\HS(\Omega);\mathbb{R})$ is well-defined, and an elementary calculation reveals that 
\begin{align} \label{eq:Jprime}
 \dprod{\E_\varepsilon'(u),v}=\varepsilon \int_\Omega \nabla u \cdot \nabla v \dx+\int_\Omega quv \dx -\int_\Omega f(x,u)v \dx \qquad \forall v \in \HS(\Omega).
\end{align}
For our purpose, we endow the space $\HS(\Omega)$ with the inner product defined by 
\begin{align} \label{eq:epsilonip}
 (u,v)_\varepsilon:=\varepsilon\int_\Omega  \nabla u \cdot \nabla v \dx+ \nu\int_\Omega uv \dx,
\end{align}
and the induced norm 
\begin{align}\label{eq:epsilonnorm}
 \norm{u}^2_{\X}:=\norm{u}_\varepsilon^2:= \varepsilon\int_\Omega  |\nabla u|^2 \dx+\nu \int_\Omega u^2 \dx,
\end{align}
where $\nu$ is the constant from~\eqref{eq:qbound}. We note that this norm is equivalent to the standard $\HS(\Omega)$-norm (with equivalence constants depending on~$\varepsilon$ and~$\nu$); in particular the space $\HS(\Omega)$ equipped with the above inner product is a Hilbert space. 

If we state some additional conditions on $f$, then the functional $\Ee$ from \eqref{eq:energyfunctional} satisfies the Palais--Smale compactness condition (PS) on $\HS(\Omega)$. Specifically, we assume that 
\begin{enumerate}
 \item[(f3)] $f(x,t)=o(|t|)$ as $t \to 0$, and
 \item[(f4)] there are constants $\mu > 2$ and $r \geq 0$ such that
 \[0 < \mu F(x,t) \leq t f(x,t) \qquad \forall |t| \geq r.\]
\end{enumerate}
We remark that integrating (f4) yields the existence of constants $a_3,a_4 >0$ such that
\begin{align} \label{eq:f4cons}
 F(x,t) \geq a_3|t|^\mu-a_4 \qquad \forall x\in\overline\Omega\quad\forall t \in \mathbb{R};
\end{align}
cf.~\cite[Rem.~2.13]{Rabinowitz:86}. If $f$ satisfies (f1)--(f4), then $\Ee$ from \eqref{eq:energyfunctional} does indeed fulfil the PS-condition; we refer to~\cite[p.~11]{Rabinowitz:86} for a detailed analysis. Moreover, invoking the mountain pass theorem, Theorem~\ref{thm:mountainpass}, these assumptions yield the existence of a nontrivial weak solution to \eqref{eq:modelproblem}--\eqref{eq:modelproblemboundary}; see~\cite[Thm.~2.15]{Rabinowitz:86}. Furthermore, we may obtain a nontrivial classical solution, provided that the domain $\Omega$ is sufficiently smooth, and (f1) is replaced by the following stronger condition:
\begin{enumerate}
 \item[(f1')] $f$ is locally Lipschitz continuous in $\overline{\Omega} \times \mathbb{R}$.
\end{enumerate}
Within this setting, i.e. for a sufficiently smooth domain $\Omega$, and assuming (f1') and (f2), it follows that any weak solution of \eqref{eq:modelproblem}--\eqref{eq:modelproblemboundary} is in fact classical; see, e.g. \cite{Agmon:59}. Moreover, supposing that (f1') and (f2)--(f4) hold true, then there exists a positive and negative classical solution; we refer to \cite[Cor.~2.23]{Rabinowitz:86}.

\subsection{Galerkin discretization}

We consider a sequence of hierarchically enriched conforming finite-dimensional subspaces $\X_0\subset \X_1\subset\ldots\subset\HS(\Omega)$. In the specific context of the semilinear boundary value problem~\eqref{eq:pde} the steepest descent direction from~\eqref{eq:sd} with respect to the inner product defined in~\eqref{eq:epsilonip} satisfies the following \emph{linear Galerkin formulation}: Given $w^k_N\in\X_N$, find $d^k_N\in\X_N$ such that
\begin{align} \label{eq:steepestdescent}
\varepsilon\int_\Omega \nabla d_N^k \cdot \nabla v \dx+ \nu \int_\Omega d_N^k v \dx=-\varepsilon\int_\Omega \nabla w_N^k \cdot \nabla v \dx - \int_\Omega q w_N^k v \dx+\int_\Omega f(x,w_N^k)v \dx,
\end{align}
for all $v\in\X_N$. This is the key part in the LMMG Algorithm~\ref{alg:LMMG} (in addition to the optimization process associated to the peak selection). It underlines that the discrete solution of~\eqref{eq:pde} splits into a sequence of linear discrete problems which is obtained iteratively on each of the Galerkin spaces $\{\X_N\}_{N}$. Incidentally, from \eqref{eq:steepestdescent} we immediately see that $d_N^k = 0$ if and only if
\begin{align*}
\int_\Omega \varepsilon \nabla w_N^k \cdot \nabla v \dx+\int_\Omega q w_N^kv \dx= \int_\Omega f(x,w_N^k)v \dx \qquad \forall v \in {\X_N},
\end{align*}
i.e. if and only if $w_N^k$ is the Galerkin solution of~\eqref{eq:pde} in $\X_N$.

\subsection{Convergence of the adaptive LMMG algorithm in the context of semilinear elliptic PDE} 

We aim to solve the semilinear elliptic problem \eqref{eq:modelproblem}--\eqref{eq:modelproblemboundary} by applying the adaptive LMMG Algorithm~\ref{alg:LMMG} to the functional $\Ee$ from \eqref{eq:energyfunctional}. In order for the assumptions of Theorem~\ref{thm:fullconvergence} to hold, we introduce two additional properties of $f$:
\begin{enumerate}
 \item[(f5)] For any given $x\in\Omega$, the function
 \[
 t\mapsto\begin{cases}
 0&\text{if }t=0,\\ 
 |t|^{-1}f(x,t) &\text{if }t\neq 0,
 \end{cases}
 \]
 is strictly increasing in $t$,
 \item[(f6)] For any $x\in\Omega$, the function $f(x,t)$ is continuously differentiable with respect to $t$.
\end{enumerate}
We focus on the case $L=\{0\}$. Then, $L^\perp=\X=\HS(\Omega)$ and $\sL=S_{\H}=\{v \in \HS(\Omega): \norm{v}_{\varepsilon}=1\}$. Recall that the energy functional $\E_\varepsilon$ satisfies the Palais--Smale condition (PS) provided that $f$ features the properties (f1)--(f4). We will now verify the conditions (a)--(c) in Proposition~\ref{pr:convergence1}, as well as the boundedness of $p(\cdot)$ and $\E'(p(\cdot))$, cf.~Theorem~\ref{thm:fullconvergence} and Remark~\ref{rem:convergence}. The following proposition is a summary of the results in \cite[\S 4]{LiZhou:01}; its proof is presented in Appendix~\ref{sc:app}.
 
\begin{proposition}\label{pr:app}
Let $p$ be a peak selection of $\Ee$ from \eqref{eq:energyfunctional} with respect to $L:=\{0\}$. If $f$ satisfies {\rm (f1)--(f6)}, then 
  \begin{enumerate}[\rm (i)]
   \item the peak selection $p$ is uniquely defined and continuous,
   \item $\inf_{v \in S_{\H}} \Ee(p(v)) \geq \rho$ for some $\rho>0$,
   \item $\inf_{y\in L}\norm{p(v)-y}_{\varepsilon}=\norm{p(v)}_{\varepsilon} \geq \alpha$ for some $\alpha >0$ and for all $v \in S_{\H}$, 
   \item $\norm{p(v)}_{\varepsilon} \leq \beta$ and $\norm{\Ee'(p(v))}_{\H^\star} \leq \gamma$ for some $\beta,\gamma >0$ and for all $v \in S_{\H}$.
   \end{enumerate}
In particular, the assumptions of Proposition~\ref{pr:convergence1} are satisfied.   
\end{proposition}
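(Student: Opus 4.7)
The plan is to exploit the major simplification afforded by $L=\{0\}$: then $S_{L^\perp}=S_\H$ and $L_v=\{tv:\,t\ge 0\}$, so the peak selection collapses to a one-dimensional optimisation, $p(v)=t^\star(v)\,v$ with $t^\star(v):=\argmax_{t\ge 0}g_v(t)$ and $g_v(t):=\Ee(tv)$. Every one of the assertions (i)--(iv) can then be recast as a statement about the scalar function $g_v$, and the whole proof is driven by a careful analysis of this function.

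For (i), I would first record that $g_v\in\C^1([0,\infty))$, $g_v(0)=0$, $g_v'(0)=0$ (using $f(\cdot,0)\equiv 0$, a direct consequence of (f3)), $g_v(t)>0$ for small $t>0$, and $g_v(t)\to -\infty$ as $t\to\infty$, the last of these via the coercivity bound $F(x,t)\ge a_3|t|^\mu-a_4$, $\mu>2$, from~\eqref{eq:f4cons}. Uniqueness of the maximizer hinges on (f5): for $t>0$,
\[
 \frac{g_v'(t)}{t}=\varepsilon\int_\Omega|\nabla v|^2\dx+\int_\Omega qv^2\dx-\int_{\{v\neq 0\}}\frac{f(x,tv)}{tv}v^2\dx,
\]
and (f5) renders $f(x,tv(x))/(tv(x))$ strictly increasing in $t$ pointwise, so the right-hand side is strictly decreasing in $t$ from a positive value (by (f3)) down to $-\infty$. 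Hence there is a unique $t^\star(v)>0$ with $g_v'(t^\star(v))=0$, at which $g_v$ attains its global maximum. Continuity of $v\mapsto t^\star(v)$ then follows from the implicit function theorem applied to the map $(v,t)\mapsto g_v'(t)/t$, using the strictly negative $t$-derivative at the root and the $\C^1$-regularity granted by (f6).

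Parts (ii) and (iii) rest on the mountain pass geometry of $\Ee$. Assumptions (f1)--(f3) combined with the Sobolev embedding $\HS(\Omega)\hookrightarrow \L^{s+1}(\Omega)$ produce constants $r,\alpha>0$ with $\Ee(u)\ge\alpha$ on the sphere $\{\norm{u}_\varepsilon=r\}$. Since $p(v)$ is the global maximum of $\Ee$ along the ray through $v\in S_\H$ and $\norm{rv}_\varepsilon=r$, we obtain $\Ee(p(v))\ge\Ee(rv)\ge\alpha=:\rho$, which is (ii). The same growth estimates yield a uniform quadratic upper bound $\Ee(tv)\le Ct^2$ for small $t$, independent of $v\in S_\H$; combined with $\Ee(p(v))\ge\rho$ this forces $t^\star(v)\ge\sqrt{\rho/C}=:\alpha$, and $\norm{p(v)}_\varepsilon=t^\star(v)$ gives (iii).

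The hard part is (iv). The natural starting point is the Nehari-type identity $\dprod{\Ee'(p(v)),p(v)}=t^\star(v)\,g_v'(t^\star(v))=0$; combined with (f4) (which implies $\tfrac{1}{\mu}f(x,t)t-F(x,t)\ge -C_0$ uniformly in $x,t$) it yields
\[
 \Ee(p(v))=\Ee(p(v))-\tfrac{1}{\mu}\dprod{\Ee'(p(v)),p(v)}\ge\left(\tfrac{1}{2}-\tfrac{1}{\mu}\right)\norm{p(v)}_\varepsilon^2-C_1,
\]
so the desired bound $\norm{p(v)}_\varepsilon\le\beta$ reduces to a uniform upper bound on $\Ee(p(v))$. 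I would obtain the latter by sharpening the estimate $\Ee(tv)\le\tfrac{t^2}{2}(1+c_\nu)-a_3 t^\mu\int_\Omega|v|^\mu\dx+a_4|\Omega|$ and optimizing in $t\ge 0$; the main technical obstacle is to control $\int_\Omega|v|^\mu\dx$ for $v\in S_\H$ via the Poincar\'e--Sobolev inequalities on the bounded domain $\Omega$, exploiting the interplay between the $\varepsilon$-weighted norm~\eqref{eq:epsilonnorm} and the $\L^\mu$-norm. Once $\norm{p(v)}_\varepsilon\le\beta$ is in hand, the bound $\norm{\Ee'(p(v))}_{\H^\star}\le\gamma$ is a direct consequence of the representation~\eqref{eq:Jprime}, assumption (f2), and the Sobolev embedding. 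The final claim on Proposition~\ref{pr:convergence1} is then immediate: its hypotheses (a), (b), (c) are exactly (i), (iii), and (ii), respectively.
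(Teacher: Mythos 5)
Your arguments for (i)--(iii) follow the paper's: uniqueness of the maximizer $t^\star_v$ from the strict monotonicity of $t\mapsto f(x,t)/t$ (f5), existence of a positive maximizer from the small-$t$ and large-$t$ behaviour of $g_v(t)=\Ee(tv)$ derived from (f2)--(f4), continuity via the implicit function theorem using (f5)--(f6), and (ii)--(iii) from the small-$t$ lower bound on $g_v$ that is uniform over $v\in S_\H$.

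For (iv) you deviate. The paper works directly from the upper bound $g_v(t)\le\tfrac{\cU}{2}t^2+a_4|\Omega|-a_3t^\mu\int_\Omega|v|^\mu\dx$ and argues that if $t^\star_{v^\ell}\to\infty$ then the right-hand side must tend to $-\infty$, contradicting $g_{v^\ell}(t^\star_{v^\ell})\ge\rho$. You instead invoke the Nehari identity $\dprod{\Ee'(p(v)),p(v)}=0$ together with (f4) to obtain $\Ee(p(v))\ge\bigl(\tfrac12-\tfrac1\mu\bigr)\norm{p(v)}_\varepsilon^2-C_1$, thereby reducing the boundedness of $p$ to a uniform \emph{upper} bound on the critical values $\Ee(p(v))$. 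That reduction is sound and is the standard Nehari-manifold device; it is a genuinely different route from the paper's. However, the way you propose to close the remaining gap cannot work: Poincar\'e--Sobolev embeddings give $\norm{v}_{\L^\mu(\Omega)}\le C\norm{v}_\varepsilon$, i.e.\ an \emph{upper} bound, and there are sequences $v^\ell\in S_\H$ (e.g.\ rapidly oscillating normalized modes) with $\int_\Omega|v^\ell|^\mu\dx\to0$, so no uniform lower bound $\int_\Omega|v|^\mu\dx\ge c>0$ is available on $S_\H$. Consequently, maximizing $\tfrac{t^2}{2}\cU-a_3t^\mu\int_\Omega|v|^\mu\dx+a_4|\Omega|$ over $t>0$ does not yield a $v$-independent upper bound; the maximum blows up as $\int_\Omega|v|^\mu\dx\to0$. (The paper's own argument implicitly relies on the same quantity not degenerating, so the step is delicate either way; but the fix you propose via Sobolev embedding points in the wrong direction and would need to be replaced.) Your final step --- deriving $\norm{\Ee'(p(v))}_{\H^\star}\le\gamma$ once $\norm{p(v)}_\varepsilon\le\beta$ is known, via \eqref{eq:Jprime}, (f2), H\"older, and the Sobolev embedding --- coincides with the paper's.
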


\begin{remark}
In the present setting it is not evident whether $\{\E_\varepsilon'(w_{N}^{k^\star})\}_{N\in J}$, for any infinite subset $J \subseteq \mathbb{N}$, has a converging subsequence, i.e. Theorem~\ref{thm:fullconvergence} {\rm (II)} cannot be applied straightaway. Moreover, with regards to Remark~\ref{rem:convergence} {\rm(ii)}, we note that the operator $\E_\varepsilon':\HS(\Omega) \to \mathrm{H}^{-1}(\Omega)$ is not compact. By assumption {\rm(iv)} above, however, Remark~\ref{rem:convergence} {\rm(iii)} implies the weak convergence of $\{\Ee(w_N^{k^\star})\}_N$ to 0. Finally, in certain cases, we notice that a small enough \emph{a posteriori} bound for $\norm{\Ee(w_N^{k^\star})}_{\X^\star}$, cp.~\eqref{eq:resbound} below, may guarantee the existence of a weak solution of~\eqref{eq:pde} in a neighbourhood of $w_N^{k^\star}$; we refer the interested reader to~\cite{Plum:2001,BreuerMcKennaPlum:2003}.
\end{remark}
 
\begin{remark}
For the special case $L=\{0\}$ as in the above Proposition~\ref{pr:app}, the application of the peak selection, cf.~lines 8 and 13 of the LMMG Algorithm~\ref{alg:LMMG}, amounts to a one-dimensional optimization problem. More precisely, we need to minimize the mapping $t \mapsto -\Ee(tv)$ on $\mathbb{R}^+$, for $v=v^{k+1}_N$. Applying differentiation, this can be expressed in terms of a scalar nonlinear equation, viz.
\[
t\left(\varepsilon  \int_\Omega  | \nabla v|^2 \dx+ \int_\Omega q v^2 \dx\right)=\int_\Omega f(x,tv)v \dx,
\]
cf.~\eqref{eq:gderivative} in Appendix~\ref{sc:app}. This yields a unique minimizer $t_v^\star>0$, and, thereby, the evaluation of the peak selection $p(v)=t_v^\star v$. Equivalently, in the singularly perturbed case $0<\varepsilon\ll 1$, a numerically more stable approach is to first compute the unique minimizer $\widehat{t}^\star_v >0 $ of the scaled mapping $t \mapsto -\varepsilon^{-1}\Ee(\varepsilon^{\nicefrac12}tv)$ on $\mathbb{R}^+$, and then to determine $p(v)=\varepsilon^{\nicefrac{1}{2}} \widehat{t}^\star_v v$.  
\end{remark}

\begin{remark}
Once Algorithm~\ref{alg:LMMG}, initiated from $L=\{0\}$, yields an adequate approximate solution $w_N^{k^\star}$, we may restart the procedure with $w_1:=w_N^{k^\star}$ and  $L:=\{w_1\}$; see e.g.~Experiment~\ref{sec:exphenon} (Case 2). The peak selection now amounts to a two-dimensional (constraint) local minimization problem, for which a variety of solvers are available including gradient methods or interior-point schemes. Here, we note that Proposition~\ref{pr:app} does no longer apply for $\dim(L)>0$. Nonetheless, as can be seen from Figure~\ref{fig:Henonq12}, the a posteriori error bound for $\norm{\Ee(w_N^{k^\star})}_{\X^\star}$ may still decay; in combination with the existence and enclosure results from~\cite{Plum:2001,BreuerMcKennaPlum:2003}, this could potentially yield a new solution of~\eqref{eq:pde} in a neighbourhood of $w_N^{k^\star}$.  
\end{remark}

\section{Numerical experiments} \label{sec:numexp}

The aim of this section is to test our adaptive LMMG Algorithm~\ref{alg:LMMG} in the context of the singularly perturbed semilinear elliptic boundary value problem~\eqref{eq:pde}. This requires to solve~\eqref{eq:steepestdescent} on a suitable family of Galerkin spaces. In the sequel, standard low-order finite element discretizations will be applied.

\subsection{Finite element discretization}

We consider regular and shape-regular meshes $\mathcal{T}_N$ that partition the polygonal domain~$\Omega$ into open and disjoint triangles~$T \in\mathcal{T}_N$ such that $\overline{\Omega}=\bigcup_{T \in \mathcal{T}_N} \overline T$. For a triangle $T\in\mathcal{T}_N$, we denote by $h_T$ the diameter of ~$T$. Moreover, we consider the finite element space 
\[
\X_N:=\left\{v \in \HS(\Omega):\, v|_T \in \P_p(T) \ \forall T \in \mathcal{T}_N\right\},
\]
where, for fixed~$p\in\mathbb{N}$, we signify by $\P_p(T)$ the space of all polynomials of total degree at most $p \geq 1$ on $T \in\mathcal{T}_N$. In particular, in our numerical experiments below, we set $p=1$.

Within the adaptive LMMG framework, we will consider a sequence of finite element meshes $\{\mathcal{T}_N\}_{N}$, whereby we start with an initial (coarse) triangulation~$\mathcal{T}_0$ of $\Omega$. All subsequent meshes are obtained by (regular) refinement, i.e. for~$N\ge 0$, the mesh $\mathcal{T}_{N+1}$ is a hierarchical refinement of $\mathcal{T}_N$. 

For an edge $e \subset \partial T^+ \cap \partial T^{-}$, which is the intersection of two neighbouring elements $T^{\pm} \in \mathcal{T}_N$, we signify by $\jmp{\vec{v}}|_e=\vec{v}^{+}|_e \cdot \vec{n}_{T^+}+\vec{v}^{-}|_e\cdot \vec{n}_{T^{-}}$ the jump of a (vector-valued) function $\vec{v}$ along~$e$, where $\vec{v}^{\pm}|_e$ denote the traces of the function $\vec{v}$ on the edge $e$ taken from the interior of $T^{\pm}$, respectively, and $\vec{n}_{T^{\pm}}$ are the unit outward normal vectors on $\partial T^{\pm}$, respectively.\\

\subsection{Error indicator}

We will use a local error indicator which satisfies~\eqref{eq:I1}. To this end, we pursue a residual-based a posteriori error analysis, which is robust with respect to possibly small values of the singular perturbation parameter~$\varepsilon>0$ appearing in~\eqref{eq:modelproblem}; see, e.g., \cite{Verfurth:13}. Specifically, for a fixed triangulation $\mathcal{T}_N$, and for any finite element approximation $u\in\X_N$, it holds the \emph{$\varepsilon$-robust upper a posteriori residual bound}
\begin{align} \label{eq:resbound}
 \norm{\Je'(u)}_{{\H}^\star} \leq C_I \left(\sum_{T \in \mathcal{T}_N} \eta_T^2(u)\right)^{\nicefrac{1}{2}},
\end{align}
where $\X^\star=\mathrm{H}^{-1}(\Omega)$ signifies the dual space of~$\X=\HS(\Omega)$, equipped with the norm $\norm{\cdot}_{\X^\star}$ from~\eqref{eq:dualnorm}. Furthermore, $C_I>0$ is an interpolation constant (only depending on the polynomial degree $p$ and on the shape-regularity of the mesh, however, \emph{independent} of the singular perturbation parameter~$\varepsilon$, and
\begin{equation} \label{eq:etaloc}
\begin{split}
 \eta_T^2(u)&=\alpha_T^2 \norm{\varepsilon \Delta u - qu+ f_T(\cdot,u)}_{\L^2(T)}^2 \\ 
 &\quad+\frac{1}{2} \varepsilon^{-\nicefrac12}\alpha_T \norm{ \jmp{\varepsilon \nabla u}}_{\L^2(\partial T \setminus \partial\Omega)}^2 +\alpha_T^2 \norm{f(\cdot,u)-f_T(\cdot,u)}_{\L^2(T)}^2,
\end{split}
\end{equation}
where $f_T(\cdot,u)$ denotes a piecewise polynomial approximation of $f(\cdot,u)$ in the (local) finite element space; alternatively, if no data oscillation needs to be considered, it is sufficient to replace $f_T(\cdot,u)$ by $f(\cdot,u)$. Furthermore, for $T \in \mathcal{T}_N$, we let
\[
\alpha_T=\begin{cases}
\min(\nu^{-\nicefrac{1}{2}},\varepsilon^{-\nicefrac12}h_T),&\text{if }\nu\neq 0,\\
\varepsilon^{-\nicefrac12}h_T&\text{if }\nu=0.
\end{cases}
\] 
We refer to~\cite[\S4.4 \& \S5.2]{Verfurth:13} for details, or to~\cite[\S4]{AmreinWihler:15} for an analogous analysis. In the context of piecewise linear approximation, for instance, we remark that the data oscillation term in~\eqref{eq:etaloc} can be bounded from above in the form
\[
\norm{f(\cdot,u)-f_T(\cdot,u)}^2_{\L^2(T)} \leq C h_T \Norm{\frac{\partial f(\cdot,u)}{\partial u}}^2_{\L^\infty(T)} \norm{\nabla u}^2_{\L^2(T)},
\]
where $C$ is a constant depending on the shape-regularity of the mesh; indeed, this results from the approximation property of the Cl\'{e}ment operator (see \cite[Remark 5.17]{Verfurth:13} for a related issue). Finally, we define, for any finite element approximation $u$, 
\begin{align*}
 \eta_N(u):=\left(\sum_{T \in \mathcal{T}_N} \eta_T^2(u)\right)^{\nicefrac{1}{2}},
\end{align*}
which satisfies~\eqref{eq:I1}.

\subsection{Examples}\label{sec:ex}

In our test problems we consider either of the two square domains $\Omega_1:=(0,1)^2 \subset \mathbb{R}^2$ or $\Omega_2:=(-1,1)^2 \subset \mathbb{R}^2$, with the Euclidean coordinates denoted by $\mathbf{x}=(x,y) \in \mathbb{R}^2$. The nonlinearity $f$ in~\eqref{eq:modelproblem} is chosen in compliance with the assumptions (f1)--(f6) from Proposition~\ref{pr:app} for all our experiments. We use an initial mesh consisting of 32 uniform triangles, and run the LMMG algorithm until the number of elements in the mesh exceeds $10^6$. The marking and refinement of elements are based on the D\"orfler strategy~\cite{Doerfler:96} (with parameter $\theta=0.5$), and on the newest vertex bisection method~\cite{Mitchell:91}, respectively. In our performance plots, the residual indicator $\eta_N(u_N^{k^\star})$ and the norm $\norm{\R_N^{k^\star}}_{\varepsilon}$, for $N \geq 1$, will be displayed each time before a mesh refinement is undertaken. In accordance with the expected optimal convergence rate of the $\mathbb{P}_1$-FEM, the function $\sigma:\mathbb{N} \to \mathbb{R}^+$ applied in Algorithm~\ref{alg:LMMG} is defined by $\sigma(N):=\nicefrac{\norm{\R_0^1}_{\varepsilon}}{|\mathcal{T}_N|^{\nicefrac{1}{2}}}$, for $N \geq 1$; in particular, on the initial space $\X_0$, we apply one minimax step only (thereby yielding $\R_0^1$), and then run the while loop in Algorithm~\ref{alg:LMMG} from $N \geq 1$ onwards. 

Our numerical tests indicate that $\gamma=0.25$ and $\lambda=0.5$ are sensible choices for the steering parameter and step size control, respectively. For this setup our computations show that only a few steps (namely, one or two) of the minimax iteration are usually required on each Galerkin space, meaning that no unnecessary iterative steps are performed. In some experiments the number of minimax steps are slightly larger during the initial phase. In addition, we have observed some cases where the number of minimax steps suddenly increases on a specific Galerkin space, together with a rapid decay of the value of the functional~$\E$; this may indicate that the algorithm either follows a more effective steepest descend direction, or it has detected a new critical point.

\subsubsection{Lane-Emden equation} \label{sec:explaneemden}

In our first experiment, which is borrowed from~\cite[\S6, (1)]{choimckenna:93}, we consider the boundary value problem
\[
\begin{alignedat}{2}
 -\Delta u&=u^3 & \quad & \text{in } \Omega_1 \\
u&=0 && \text{on } \partial \Omega_1.
\end{alignedat}
\]
In line with Proposition~\ref{pr:app}, we let $L:=\{0\}$. Furthermore, we select the initial guess~$v_0^0$ to be the linear interpolant of the function $(x,y)\mapsto \sin(\pi x) \sin(\pi y)$ in the element nodes, scaled to unit length in $\X$. A visual comparison of Figure~\ref{fig:LaneEmden} (left) and~\cite[Fig.~8]{choimckenna:93} shows that the same solutions are obtained in both computations. Moreover,  Figure~\ref{fig:LaneEmden} (right) underlines that the residual indicator  decays at an optimal convergence rate of $\mathcal{O}(|\mathcal{T}_N|^{-\nicefrac{1}{2}})$ (indicated by the dashed line).

\begin{figure}[ht] 
 \hfill{\includegraphics[width=0.48\textwidth]{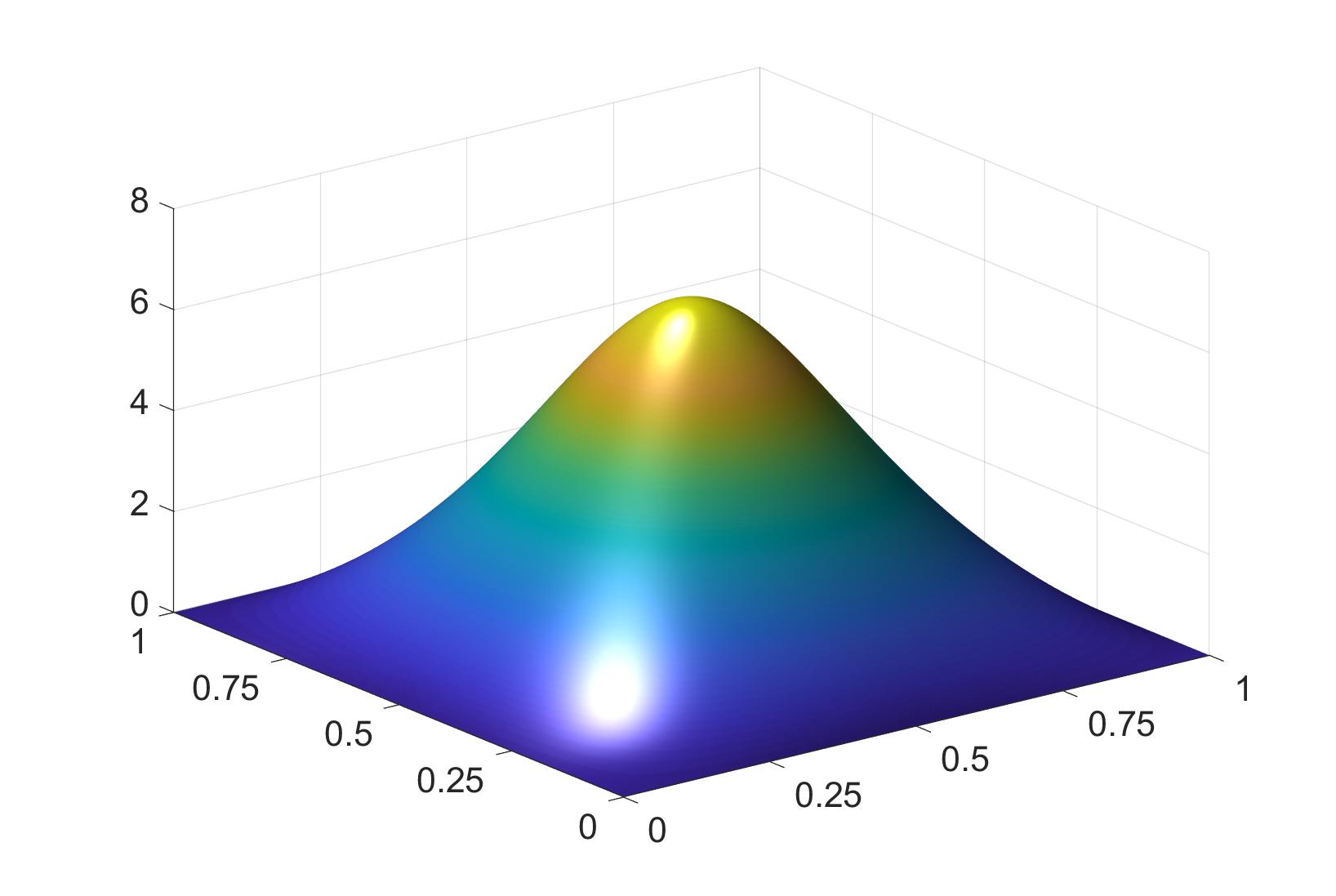}}\hfill
 \includegraphics[width=0.48\textwidth]{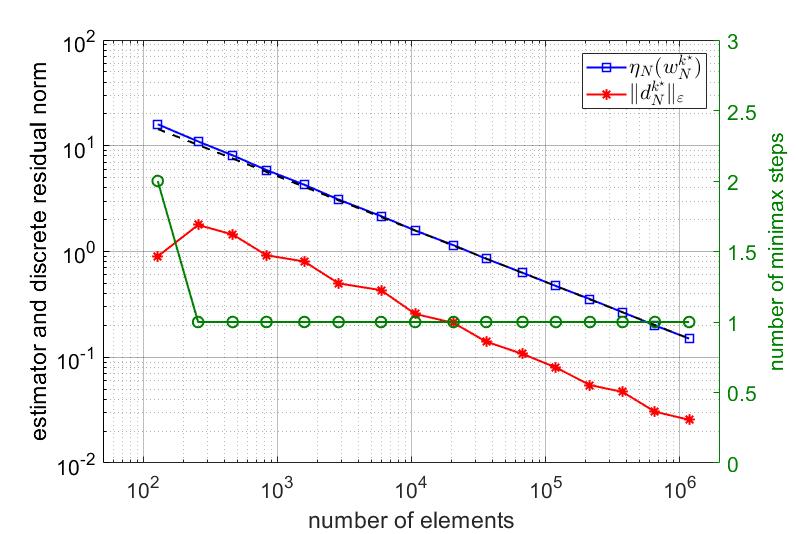} \hfill
 \caption{Experiment~\ref{sec:explaneemden}. Left: Approximated solution. Right: Convergence plots  and number of minimax steps on each finite element space.}\label{fig:LaneEmden}
\end{figure}
   
\subsubsection{Henon equation} 
\label{sec:exphenon}

Next, we focus on the problem 
\[
\begin{alignedat}{2}
-\Delta u+qu&=|\mathbf{x}|^9 u^3 & \quad & \text{in } \Omega \\
u&=0 && \text{on } \partial \Omega.
\end{alignedat}
\]
We start the numerical experiments with $L:=\{0\}$, and $v_0^0:=\sin (\pi x) \sin(\pi y)$.

\subsubsection*{Case 1: $q \equiv 0$} 
We let $\Omega=\Omega_2$. The approximated solution obtained by the LMMG Algorithm~\ref{alg:LMMG} is plotted in Figure~\ref{fig:Henon} (left). We remark that the computed solution exhibits a spike close to the corner $(-1,1)$, and coincides, up to sign, with the one from \cite[Fig.~7 (left)]{WangZhou:05}. As in the experiment before in \S\ref{sec:explaneemden}, the convergence rate of the residual indicator is optimal, see Figure~\ref{fig:Henon} (right). Moreover, we observe that the mesh is mostly refined at the location of the spike, see Figure~\ref{fig:Henon_mesh}.  

We remark that the symmetry of the initial guess with respect to the origin is not retained by the adaptive LMMG algorithm. This is likely due to the fact that local refinements of the elements are not always performed in a consistently symmetric way. Indeed, if we repeat the same experiment based on a sequence of uniform mesh refinements, then a symmetric solution is obtained, which, again up to a sign change, resembles the one from~\cite[Fig.~9 (center)]{WangZhou:05}; see Figure~\ref{fig:Henon_Uniform}. We emphasize, however, that this latter critical point features a higher energy than the former one. 

\subsubsection*{Case 2: $q\equiv 1$} 
We let $\Omega=\Omega_1$. Applying the same initial configuration as in the case $q\equiv 0$ above, a qualitatively similar solution (i.e. exhibiting one corner spike) is obtained by the LMMG algorithm, see Figure~\ref{fig:Henonq1} (left). In a subsequent experiment, we use this approximated critical point (denoted by $w_1$) to define~$L=\{w_1\}$. For this choice, the LMMG algorithm (based on the same initial guess as before) seems to generate a new solution, which we illustrate in Figure~\ref{fig:Henonq12} (left). 

\begin{figure}[ht]
\hfill \includegraphics[width=0.48\textwidth]{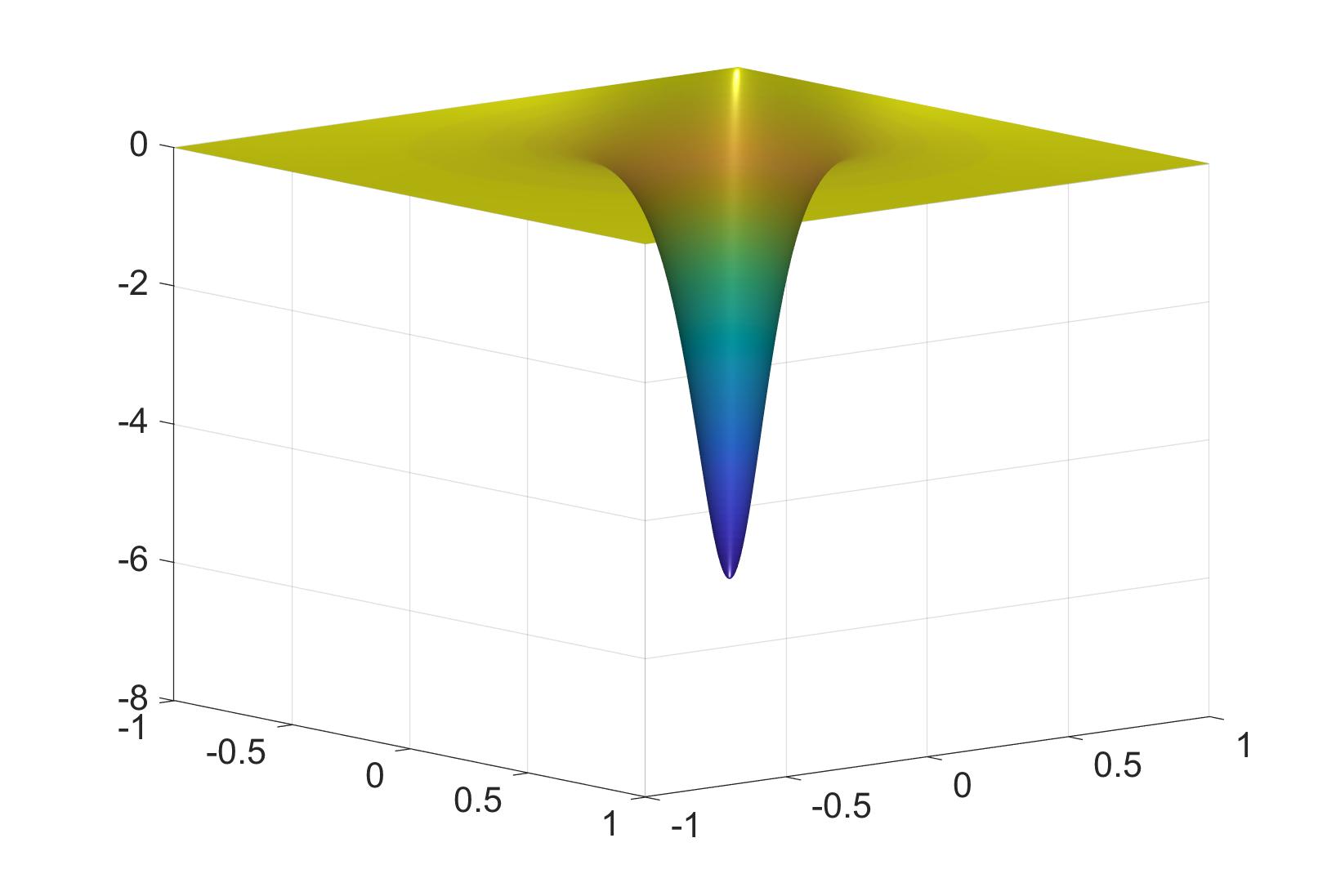}\hfill
\includegraphics[width=0.48\textwidth]{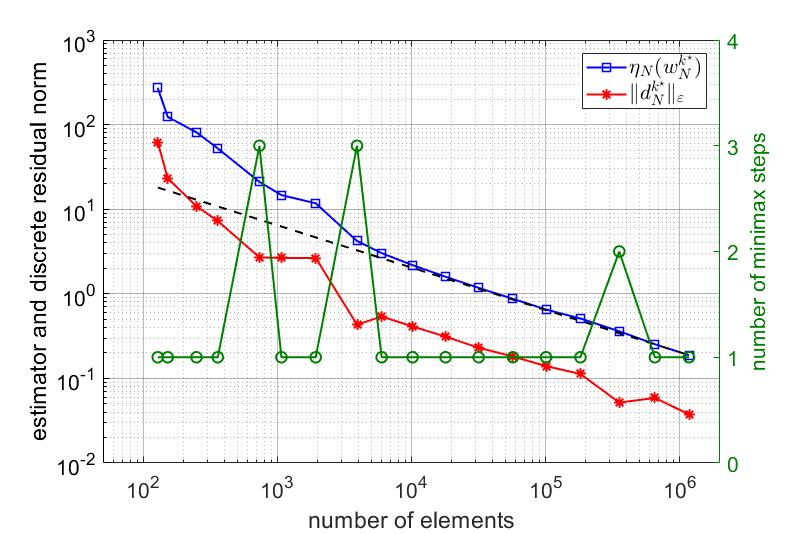} \hfill
 \caption{Experiment~\ref{sec:exphenon} with $q\equiv0$. Left: Approximated solution. Right: Convergence plots and number of minimax steps.}\label{fig:Henon}
\end{figure}

\begin{figure}[ht]
\centering
\includegraphics[width=0.5\textwidth]{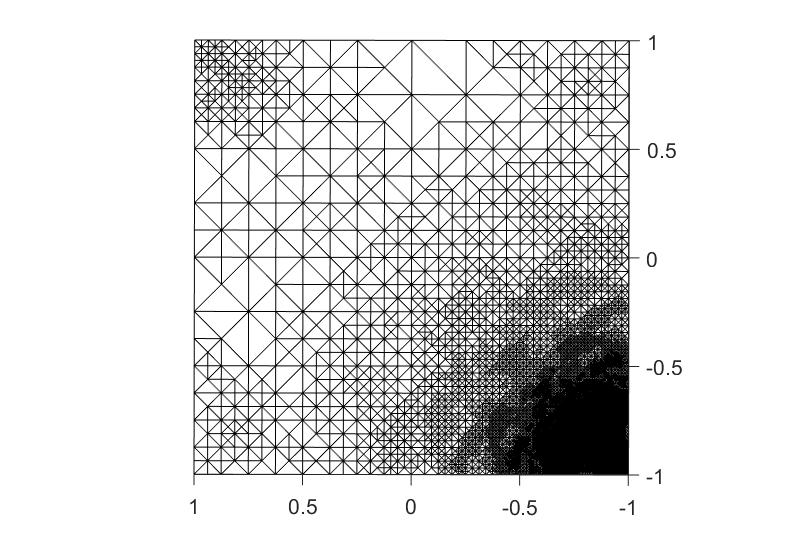}
\caption{Experiment~\ref{sec:exphenon} with $q\equiv0$: Mesh after 11 adaptive refinements.}
\label{fig:Henon_mesh}
\end{figure}

\begin{figure}[ht] 
 \hfill \includegraphics[width=0.48\textwidth]{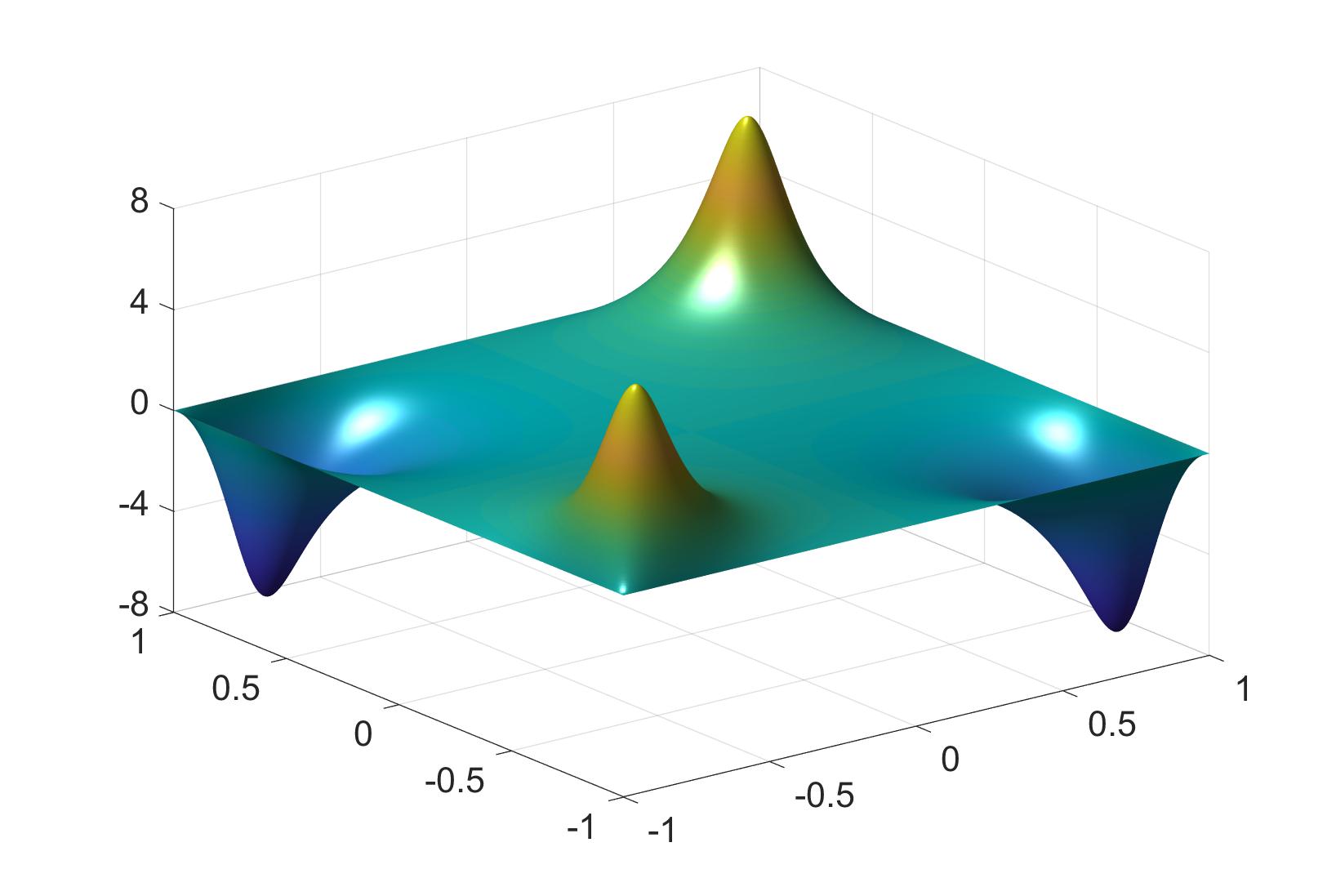} \hfill \includegraphics[width=0.48\textwidth]{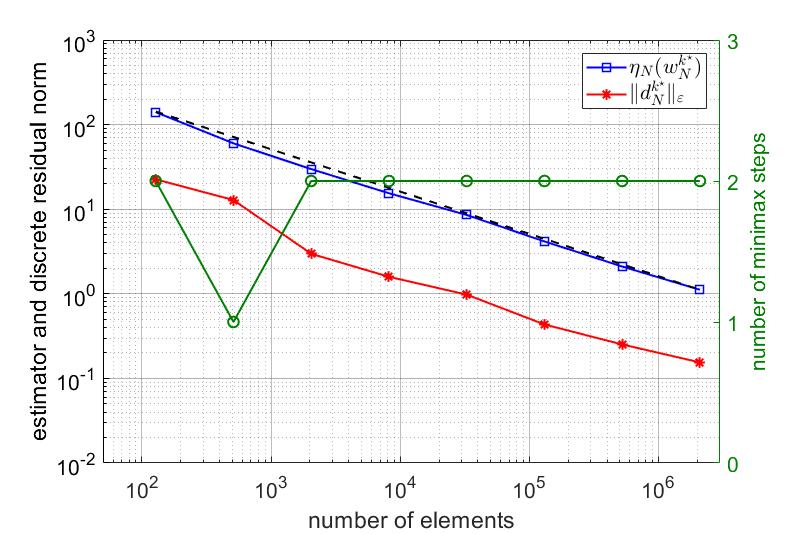}\hfill
 \caption{Experiment~\ref{sec:exphenon} with $q\equiv0$ and uniform mesh refinement. Left: Approximated solution. Right: Convergence plots and number of minimax steps.}\label{fig:Henon_Uniform}
\end{figure}

\begin{figure}[ht] 
 \hfill \includegraphics[width=0.48\textwidth]{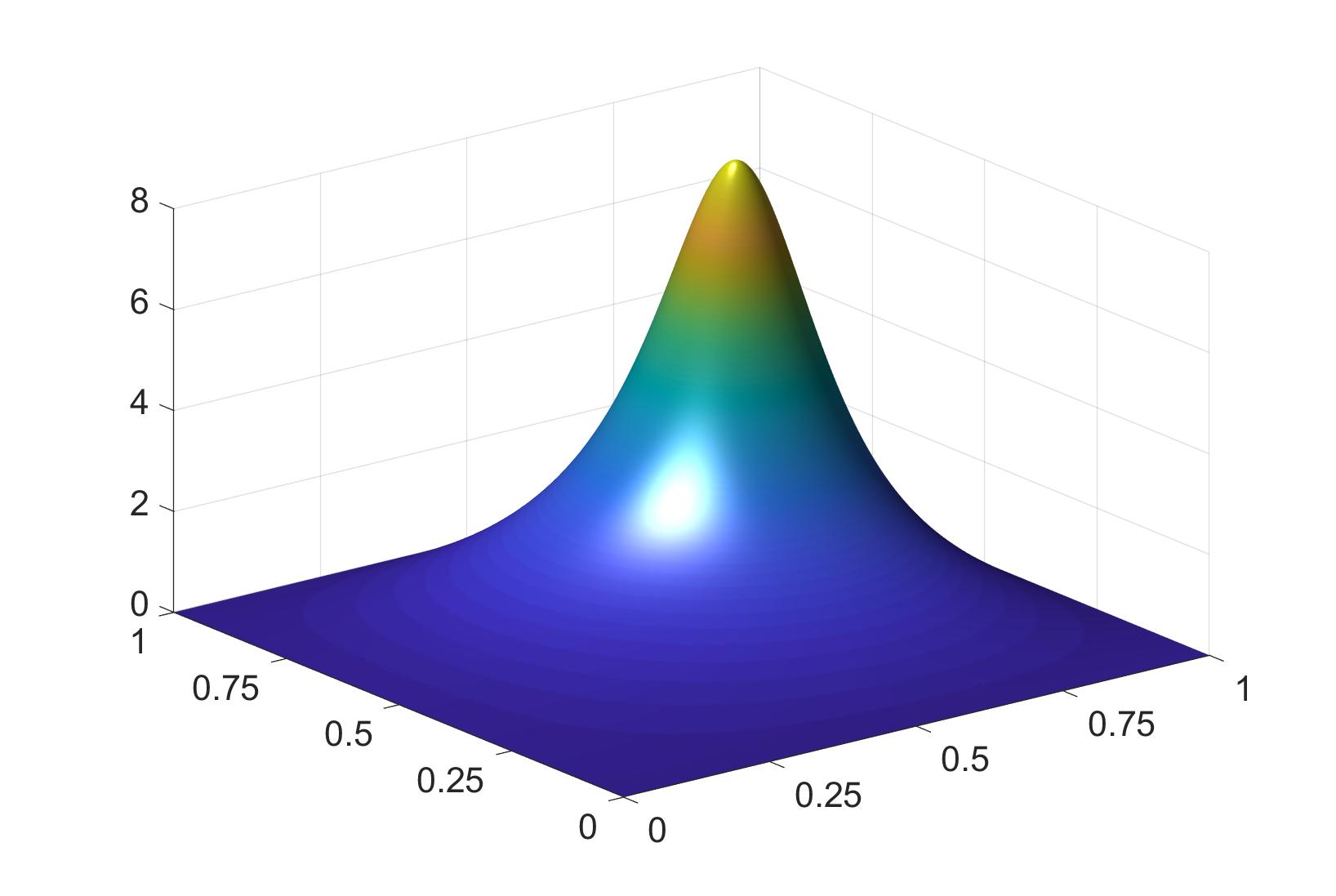}\hfill
\includegraphics[width=0.48\textwidth]{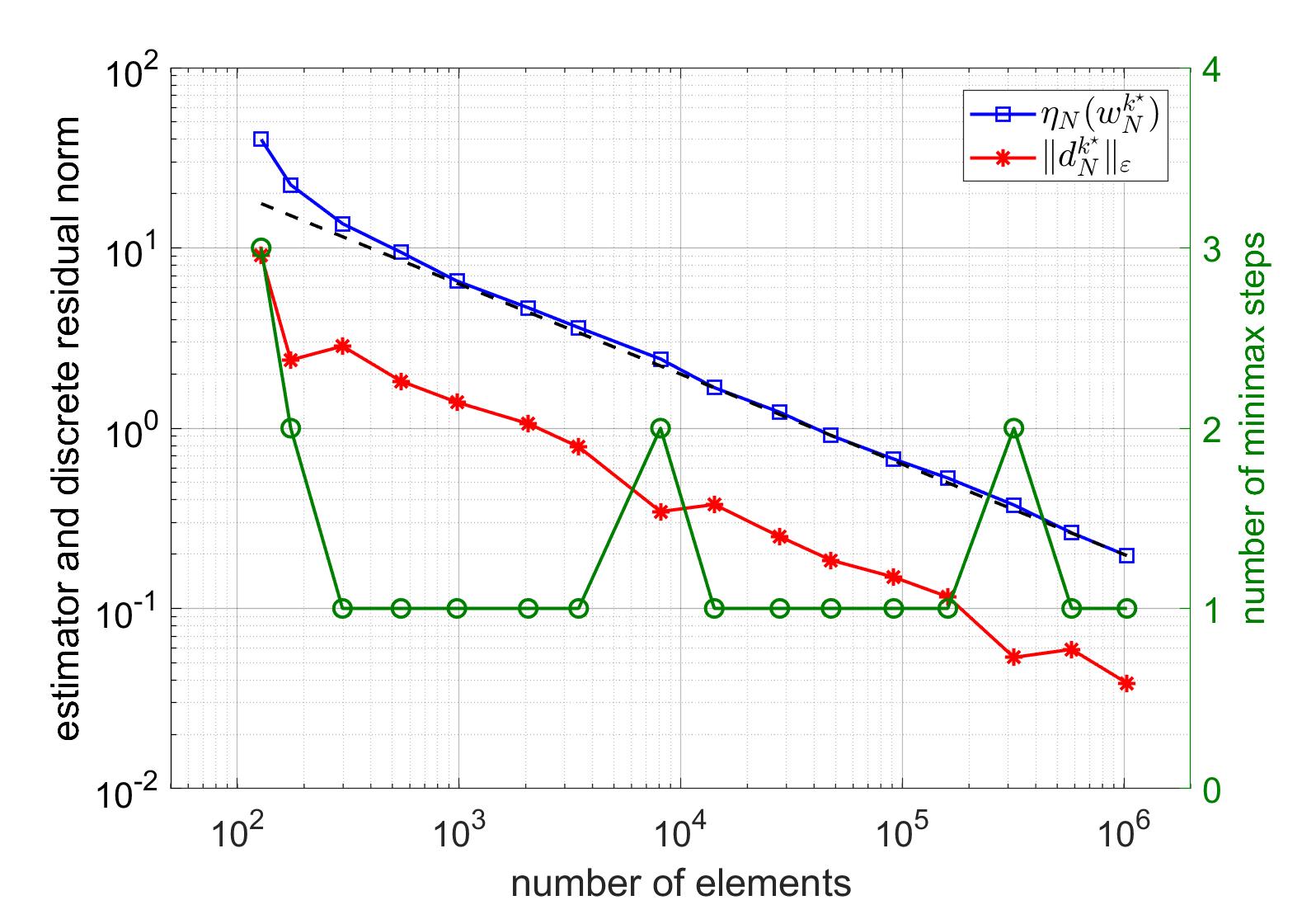}
 \caption{Experiment~\ref{sec:exphenon} with $q\equiv1$ and $L=\{0\}$. Left: Approximated solution $w_1$. Right: Convergence plots and number of minimax steps.}\label{fig:Henonq1}
\end{figure}

\begin{figure}[ht] 
 \hfill
 \includegraphics[width=0.48\textwidth]{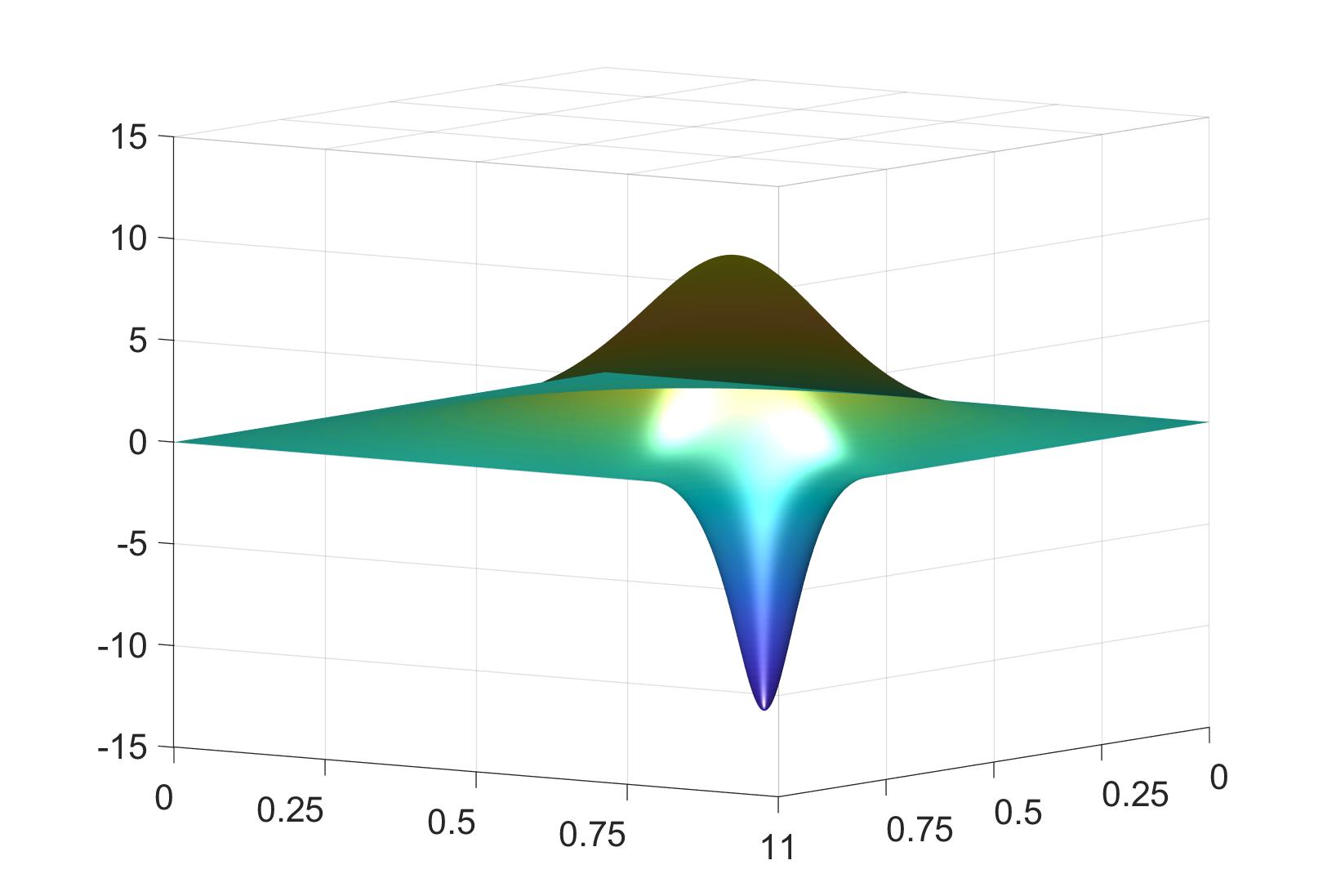} \hfill
\includegraphics[width=0.48\textwidth]{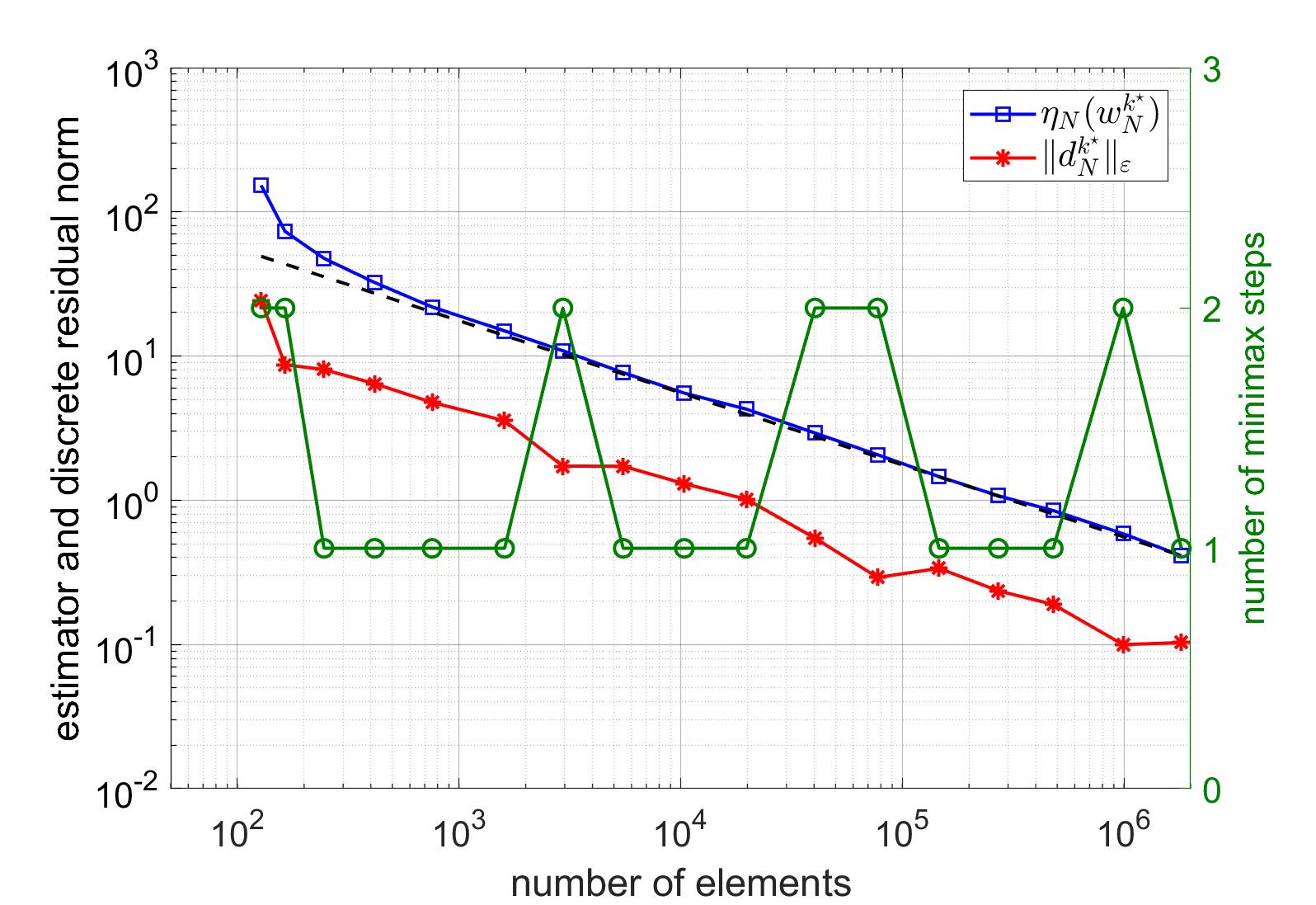} \hfill
  \caption{Experiment~\ref{sec:exphenon} with $q\equiv 1$ and $L=\{w_1\}$. Left: Approximated solution. Right: Convergence plots and number of minimax steps.}\label{fig:Henonq12}
\end{figure}

\subsubsection{A singularly perturbed Henon type equation with $q \equiv 1$} \label{sec:expsingperthenon}

We consider the problem
\[
\begin{alignedat}{2}
-\varepsilon \Delta u+u&=|\mathbf{x}|^9u^3 & \quad & \text{in } \Omega_1 \\
u&=0 && \text{on } \partial \Omega_1,
\end{alignedat}
\]
with the singular perturbation parameter $\varepsilon=0.001$; in this experiment, we set $\gamma=0.125$ and $\lambda=0.25$. The convergence of the residual estimator and the numerical solution are depicted in Figure~\ref{fig:PerturbedHenon2} (left) and~\ref{fig:PerturbedHenon2mesh} (left), respectively. We see that the spike in the corner has become much sharper, yet, the convergence rate is again asymptotically optimal. This has been accomplished by means of appropriate adaptive local element refinements near the spike, see Figure~\ref{fig:PerturbedHenon2mesh} (right). Indeed, if we employ uniform mesh refinement in Algorithm~\ref{alg:LMMG}, then the same solution is obtained, however, the convergence regime is inferior as expected, see Figure 7 (right); here, we note that the uniform mesh needs to be sufficiently fine (thereby requiring a considerably higher number of degrees of freedom) in order to be able to properly resolve the singular effects. Moreover, in comparison with the results in \S\ref{sec:exphenon} for $\varepsilon=1$, the performance for the adaptive mesh refinement strategy has not deteriorated in the singularly perturbed case. This underlines the robustness of the adaptive LMMG with respect to $\varepsilon\ll1$.

 \begin{figure}[ht] 
 \hfill
 \includegraphics[width=0.48\textwidth]{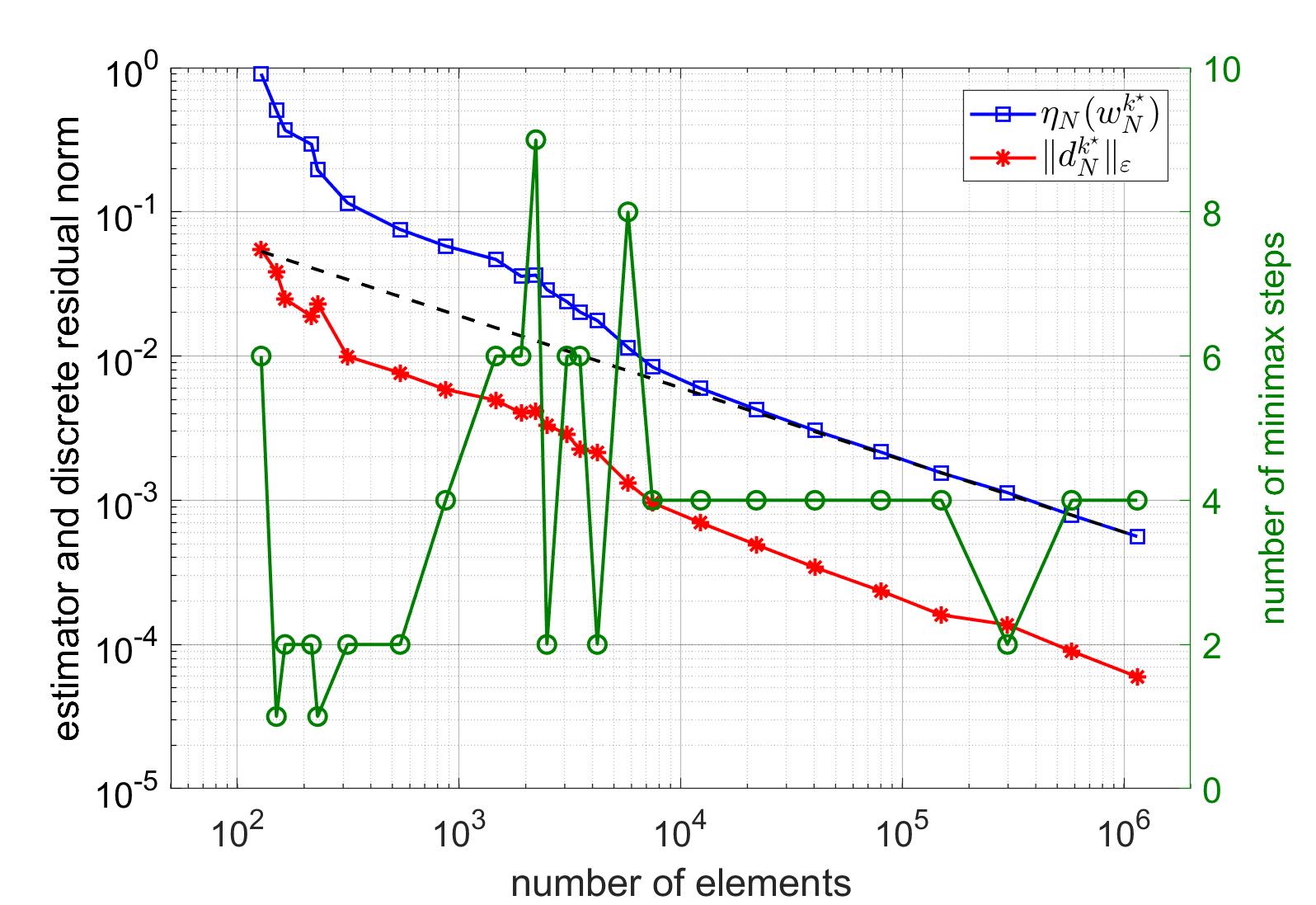}
\hfill 
 \includegraphics[width=0.48\textwidth]{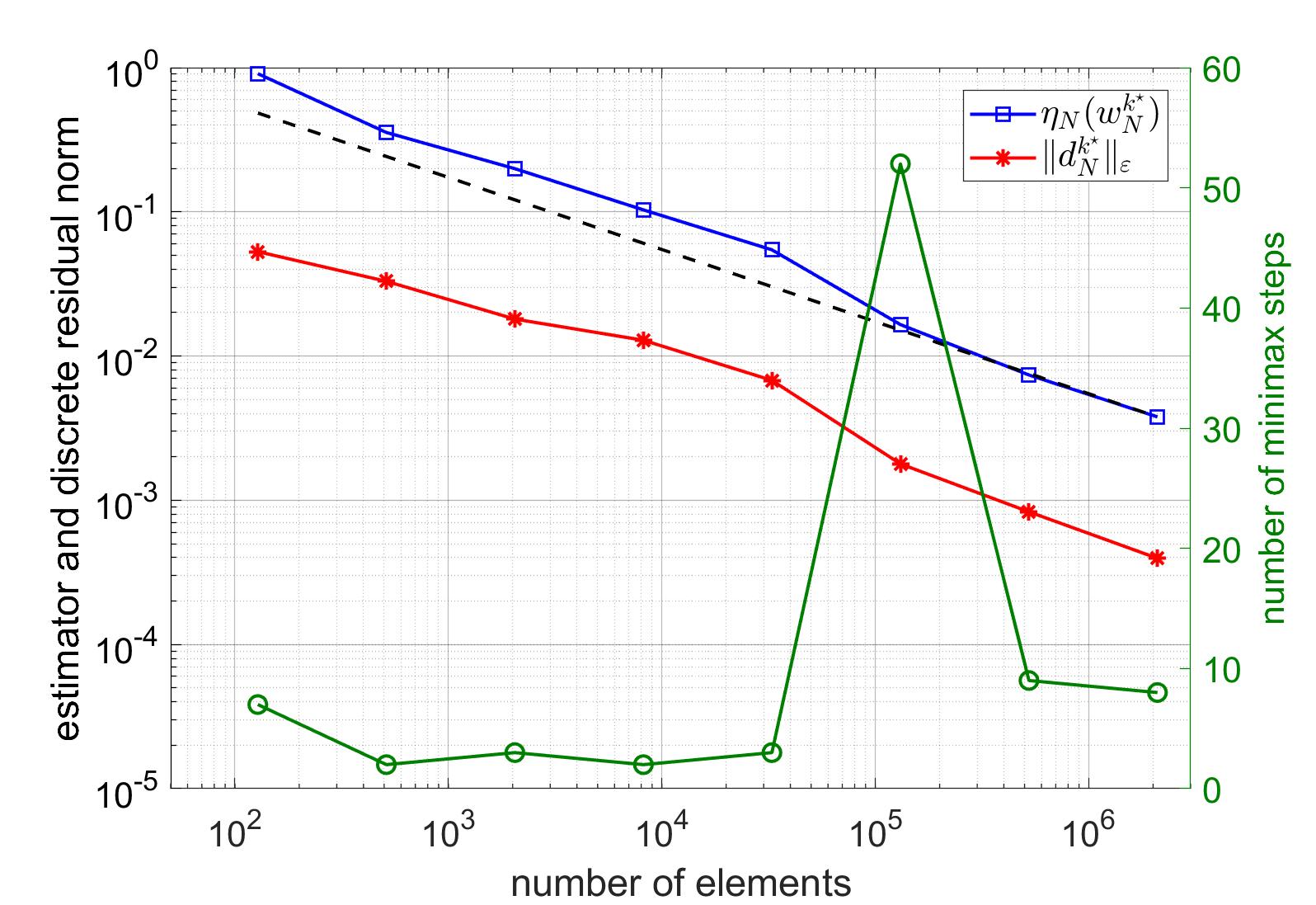}\hfill
 \caption{Experiment~\ref{sec:expsingperthenon} with $\varepsilon=10^{-3}$. Convergence plots and number of minimax steps for adaptive mesh refinement (left) and  uniform refinement (right).}\label{fig:PerturbedHenon2}
\end{figure}

 \begin{figure}[ht] 
 \hfill
 \includegraphics[width=0.48\textwidth]{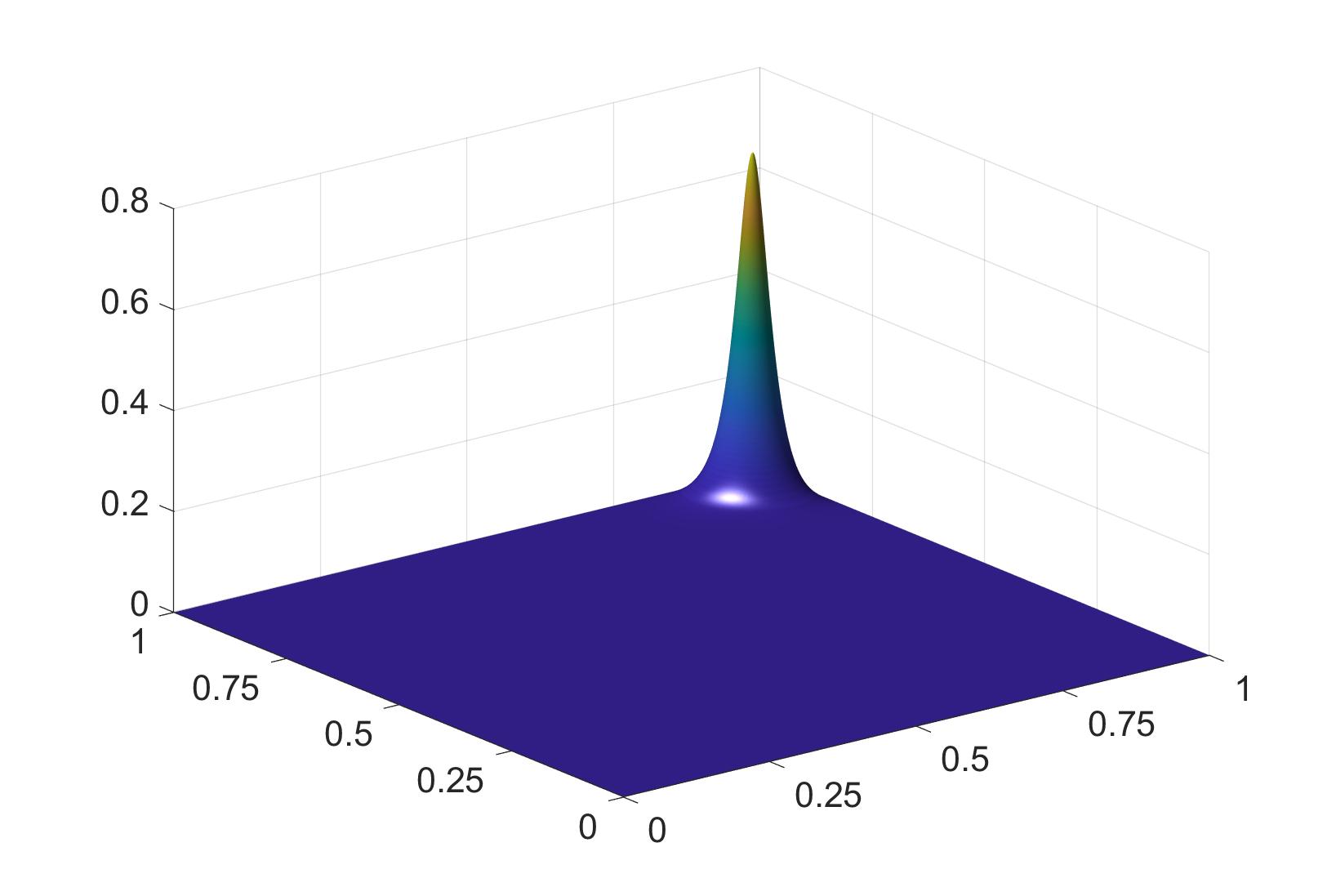}\hfill
 \includegraphics[width=0.48\textwidth]{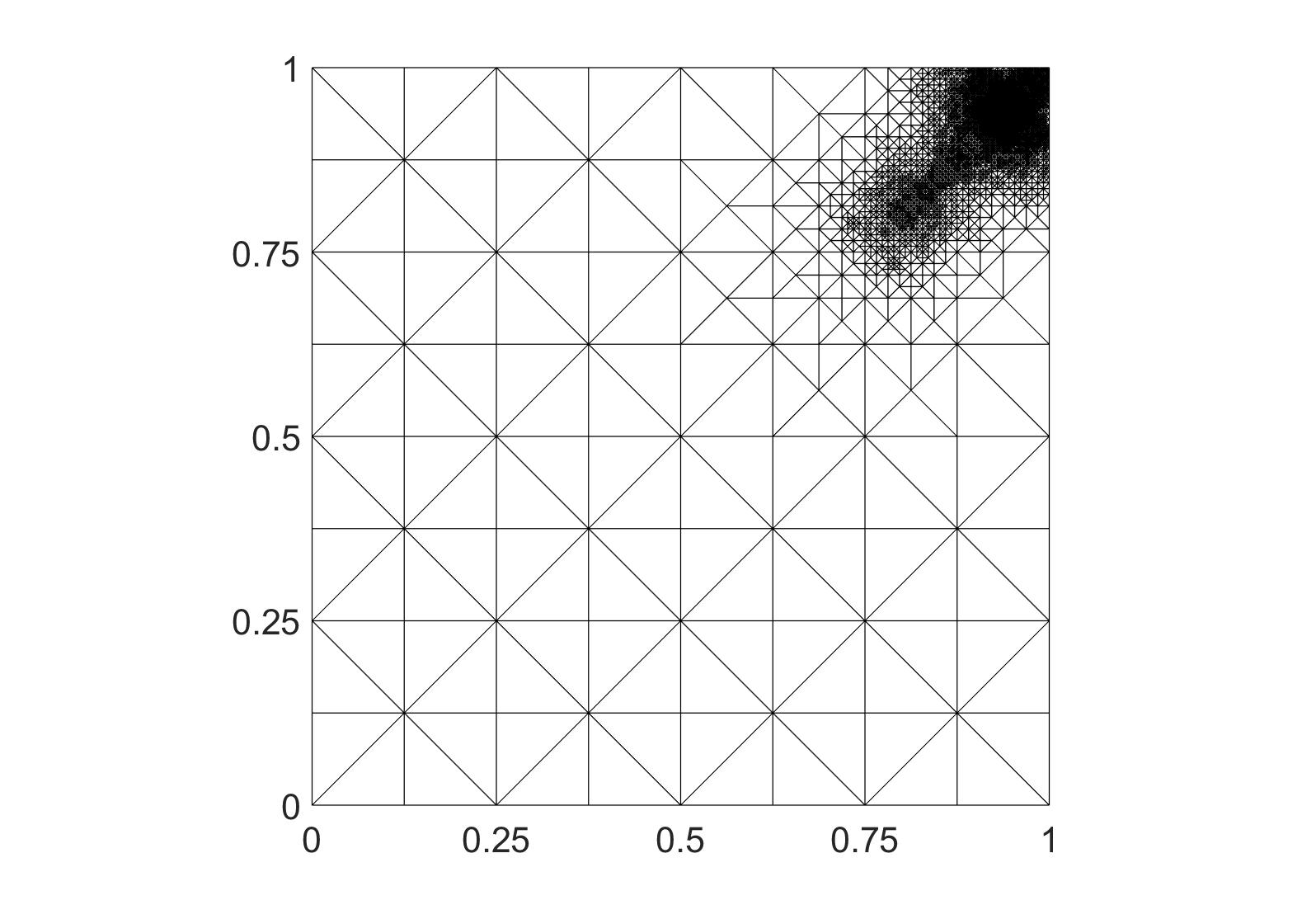}
\hfill 
 \caption{Experiment~\ref{sec:expsingperthenon} with $\varepsilon=10^{-3}$. Left: Approximated solution. Right: Adaptive mesh after 18 refinement steps.}\label{fig:PerturbedHenon2mesh}
\end{figure}

 \subsubsection{A singularly perturbed Lane-Emden type equation with $q \equiv 1$} \label{sec:expsingpertlaneemden}

We consider a further singularly perturbed problem,
\[
\begin{alignedat}{2}
-\varepsilon \Delta u+u&=u^3 & \quad & \text{in } \Omega_1 \\
u&=0 && \text{on } \partial \Omega_1,
\end{alignedat}
\]
now with $\varepsilon=10^{-8}$. Since this is a strongly perturbed problem, we expect a thin spike (or layer) in the solution. This is confirmed in Figure~\ref{fig:PerturbedLaneEmden} (left), where we observe a thin spike in the solution at the center of the domain, which is carefully resolved by adaptive mesh refinements, see Figure~\ref{fig:PerturbedLaneEmdenMesh}. Once more the convergence rate is optimal, see Figure~\ref{fig:PerturbedLaneEmden} (right), which further highlights the $\varepsilon$-robustness of the LMMG algorithm. In this experiment, due to the appearance of a thin spike in the solution, we notice again that the local mesh refinement strategy is far superior to the uniform mesh refinement, see Figure~\ref{fig:PerturbedLaneEmden}.

 \begin{figure}[ht] 
 \hfill {\includegraphics[width=0.48\textwidth]{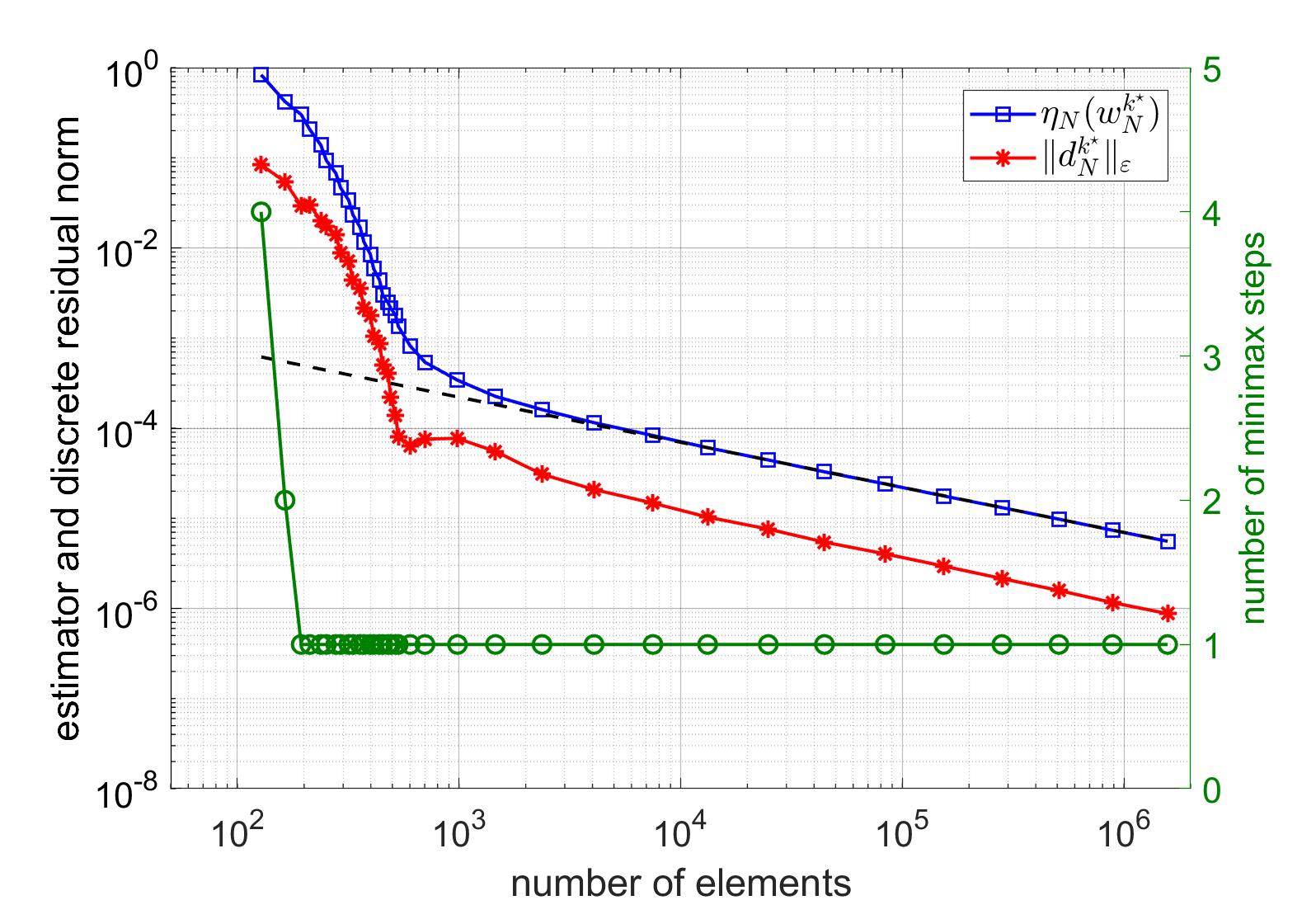}}\hfill
 {\includegraphics[width=0.48\textwidth]{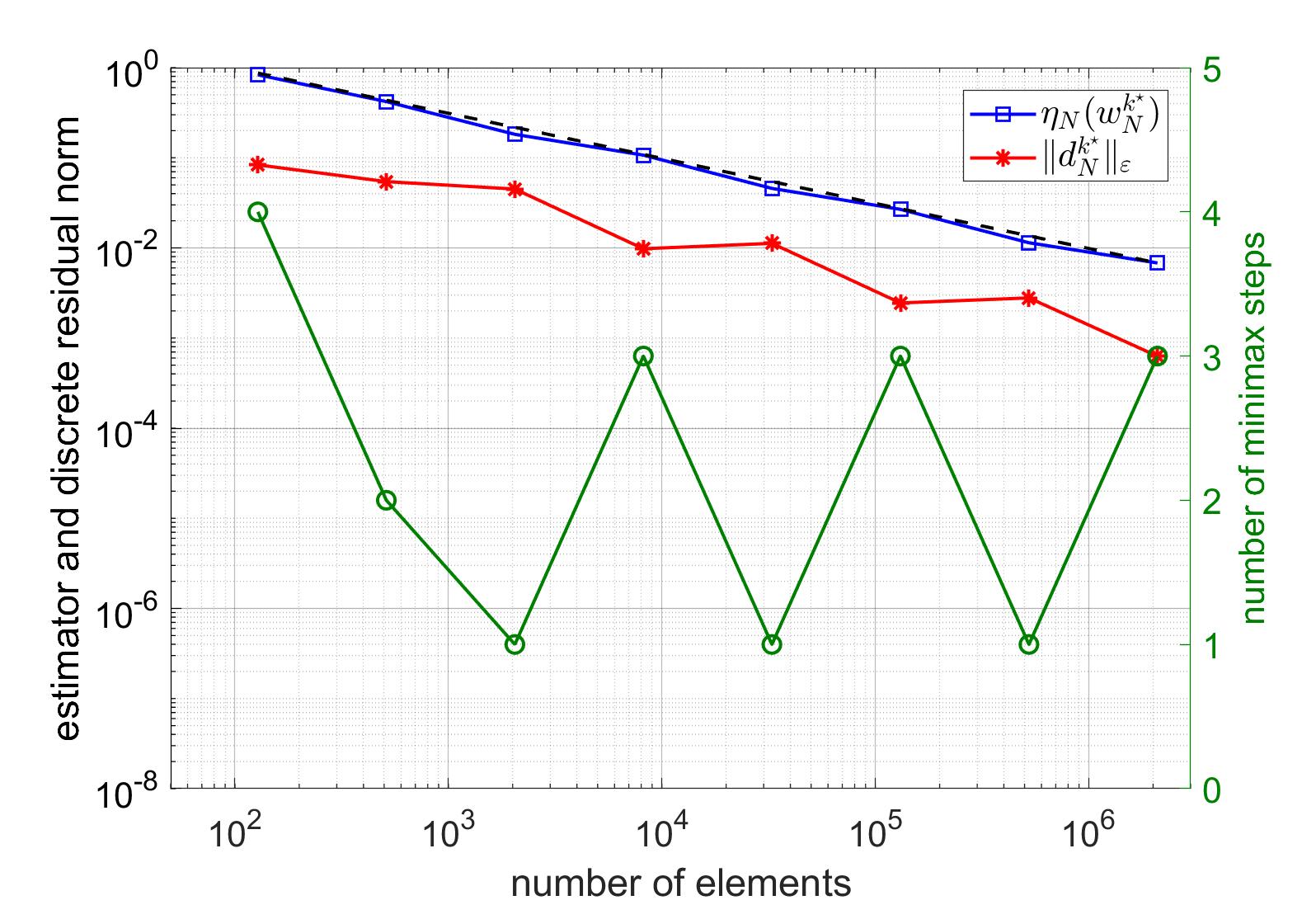}} \hfill
 \caption{Experiment~\ref{sec:expsingpertlaneemden}  with $\varepsilon=10^{-3}$. Convergence plots and number of minimax steps for adaptive mesh refinement (left) and uniform refinement (right).}\label{fig:PerturbedLaneEmden}
\end{figure}

 \begin{figure}[ht] 
 \ \hfill
 \includegraphics[width=0.48\textwidth]{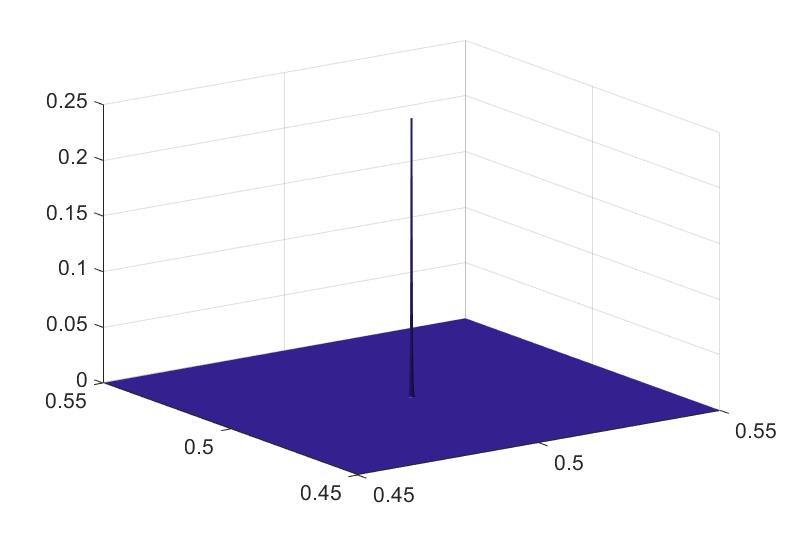}\hfill
 \includegraphics[width=0.48\textwidth]{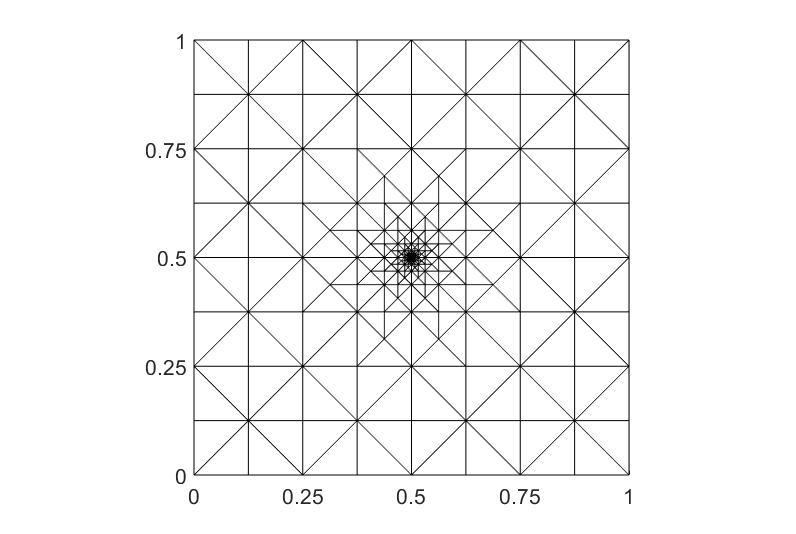}
\hfill 
 \caption{Experiment~\ref{sec:expsingpertlaneemden} with $\varepsilon=10^{-8}$. Left: Approximated solution. Right: Adaptive mesh after 25 refinement steps.}\label{fig:PerturbedLaneEmdenMesh}
\end{figure}

\section{Conclusion}

In this work, we have established an adaptive computational scheme and a convergence analysis for the numerical approximation of saddle points of non-convex energy functionals. Our approach is based on using an instantaneous interplay of the local minimax method from~\cite{LiZhou:01,LiZhou:02} and adaptive Galerkin space enrichments. Our numerical procedure was tested for a class of adaptive finite element discretizations of (singularly perturbed) semilinear elliptic PDE. The experiments presented here illustrate that the proposed LMMG algorithm is able to properly resolve thin spikes, and to achieve optimal convergence rates for the residual estimator.

\begin{appendix}

\section{Proof of Proposition~\ref{pr:app}}\label{sc:app}

We will outline the proof of Proposition~\ref{pr:app} for $n \geq 3$; the case $n=2$ is similar (and, in fact, slightly simpler). In order to do so, we will proceed along the lines of~\cite[\S4]{LiZhou:01}. We define the norm
\[
\nnn{v}^2_\varepsilon=\varepsilon\int_\Omega|\nabla v|^2\dx
+\int_\Omega qv^2\dx,\qquad v\in\HS(\Omega),
\]
which is equivalent to the norm~$\norm{\cdot}_\varepsilon$ from~\eqref{eq:epsilonnorm}. In particular, it holds that
\begin{align}\label{eq:nuconst}
\norm{v}^2_\varepsilon\le \nnn{v}^2_\varepsilon \le \cU \norm{v}^2_\varepsilon,
\end{align}
for any $v\in\HS(\Omega)$, where $\cU:=\max\{1,c_\nu\}$, cp.~\eqref{eq:qbound}.

\subsection*{Proof of (i): Uniqueness}

We show that the peak selection $p$ of $\E_\varepsilon$ with respect to $L=\{0\}$ is uniquely defined. To this end, we adapt the proof of~\cite[Lem.~2]{Ni:89} for our more general case. For any $v \in S_{\H}$, i.e. $\norm{v}_\varepsilon=1$, let 
\begin{align} \label{eq:dvdef}
   g_v(t):=\E_\varepsilon(tv)
   =\frac{t^2}{2}\nnn{v}^2_{\varepsilon}- \int_\Omega F(x,tv) \dx,
\end{align}
for $t\ge 0$. Interchanging differentiation and integration, by invoking (f1), an elementary calculation reveals that 
  \begin{align} \label{eq:gderivative}
   g_v'(t)
   =t \nnn{v}^2_{\varepsilon} -\int_\Omega f(x,tv)v \dx.
  \end{align}
Upon defining the set $\mathcal{A}_v:=\{x \in \Omega:\, v(x) \neq 0\}$, we see that $g_v'(t)=0$ if and only if 
\begin{align} \label{eq:gderequiv}
\int_{\mathcal{A}_v} \frac{f(x,t v)}{tv} v^2 \dx=\nnn{v}^2_{\varepsilon}.
\end{align}
Due to (f5) note that the left-hand side of \eqref{eq:gderequiv} is strictly increasing in $t>0$. Consequently, it exists at most one  $t^\star_v>0$ such that $g_v'(t^\star_v)=0$. It remains to establish the existence of $t^\star_v$. To this end, we examine the function $g_v(t)$ for small and large $t$ in order to show that $g_v$ changes sign. 
\begin{enumerate}[1.]
\item For any $\varepsilon'>0$, the assumptions (f2)--(f3) imply that there exists a constant $C_{\varepsilon'} >0$ such that 
\[
 f(x,t) \leq 2\varepsilon' |t| + C_{\varepsilon'}(s+1) |t|^s\qquad \forall t \in \mathbb{R}.
\]
Therefore, upon integration, we deduce that
\begin{align} \label{eq:Fepsiloni}
 F(x,t) \leq \varepsilon' t^2 + C_{\varepsilon'} |t|^{s+1} \qquad \forall t \in \mathbb{R}.
\end{align}
Hence,
\begin{equation*}
\left|\int_\Omega F(x,vt)\dx\right|
\le \varepsilon't^2\int_\Omega v^2\dx+C_{\varepsilon'}t^{s+1}\int_\Omega|v|^{s+1}\dx,\qquad t\ge 0.
\end{equation*}
Notice that $\HS(\Omega)$ equipped with the standard $\HS(\Omega)$-norm is continuously embedded in $\L^{s+1}(\Omega)$ for $1\le s+1\le\nicefrac{2n}{(n-2)}$; equivalently, $0\le s\le\nicefrac{(n+2)}{(n-2)}$, which is satisfied for $n\ge 3$ in view of condition (f2). Thus, since $\norm{v}_\varepsilon=1$, and because of the equivalence of the norm $\norm{\cdot}_{\varepsilon}$ and the standard $\HS(\Omega)$-norm, there are constants $C_1,C_2 \ge0$ (depending on~$\varepsilon$) such that 
\begin{align}\label{eq:iF}
\left|\int_\Omega F(x,vt)\dx\right|
&\le \varepsilon'C_1t^2+C_2C_{\varepsilon'}t^{s+1},
\end{align}
for $t\ge 0$. Moreover, comparing the lower and upper bounds in~\eqref{eq:Fepsiloni} and \eqref{eq:f4cons}, respectively, and recalling (f4), we observe that $s+1\ge\mu>2$. Since $\varepsilon'>0$ is chosen arbitrarily, it follows from~\eqref{eq:dvdef} and~\eqref{eq:iF} that 
\[
g_v(t)=\frac{t^2}{2}\nnn{v}^2_{\varepsilon}+o(t^2),\qquad t\searrow 0,
\]
with $o(t^2)$ independent of $v \in S_\X$. Hence, invoking~\eqref{eq:nuconst}, we find that 
\begin{equation}\label{eq:gvol}
g_v(t)\geq \frac{t^2}{2}+o(t^2),\qquad t\searrow 0,
\end{equation}
uniformly in $v \in S_\X$. Consequently, there is a constant $\rho>0$ such that $\sup_{t>0} g_v(t)\geq \rho>0$ for all $v \in S_{\H}$.

\item Applying the bounds~\eqref{eq:f4cons} and \eqref{eq:nuconst} in \eqref{eq:dvdef}, results in
\begin{align} \label{eq:gvupperbound}
 g_v(t) &\leq \frac{t^2}{2}\cU  + a_4 |\Omega| - a_3t^\mu \int_\Omega |v|^\mu \dx,
\end{align}
where $|\Omega|$ signifies the volume of $\Omega$. Since $\mu>2$, the right-hand side of the above estimate tends to $-\infty$ for $t \to \infty$; thus, the same applies for $g_v(t)$. 
\end{enumerate}
Recalling~\eqref{eq:F}, we note that $g_v(0)=0$. Hence from the two steps above, we conclude that $g_v(t)$ attains at least one maximum for $t>0$.  In summary, we have shown that there exists a unique maximum of $g_v(t)$ in $t \geq 0$ for any $v \in S_{\H}$. Therefore, the unique peak selection $p:\,S_{\H} \to {\H}$ is given by $p(v)=t^\star_vv$, where $t^\star_v > 0$ is the unique solution of $g_v'(t)=0$, cf.~\eqref{eq:gderivative}, for fixed $v \in S_{\H}$.

\subsection*{Proof of (i): Continuity}
Next, by proceeding along the lines of the proof of~\cite[Thm.~4.3.]{LiZhou:01}, we show that the peak selection $p$ is continuous. To this end, we define the map $G:\,S_\H \times \mathbb{R}^+ \to \mathbb{R}$ by $G(v,s):=g_v'(s)$. By assumption (f6), we conclude that $G(v,s)$ is continuously differentiable in $s$, and we have 
\begin{align*}
\frac{\partial G}{\partial s}(v,s)= \nnn{v}_\varepsilon^2-\int_\Omega v^2 \frac{\partial f}{\partial t}(x,sv) \dx. 
\end{align*}
Furthermore, due to (f5), we observe that $f_t(x,t)>f(x,t)t^{-1}$, for any $x\in\Omega$ and $t\in \mathbb{R} \setminus \{0\}$. Fix an arbitrary $v_0 \in S_\H$ and set $s_0:=t_{v_0}^\star>0$, i.e.~$G(v_0,s_0)=g_{v_0}'(s_0)=0$. Then, recalling~\eqref{eq:gderivative}, we deduce
\begin{align*}
0&=G(v_0,s_0)
%=s_0 \nnn{v_0}_\X^2 - \int_\Omega  v_0 f(x,s_0 v_0) \dx 
>s_0 \nnn{v_0}_\varepsilon^2 -  \int_\Omega s_0 v_0^2 f_t(x,s_0 v_0) \dx=s_0 \frac{\partial G}{\partial s}(v_0,s_0).  
\end{align*}
Since $s_0>0$, we obtain that $\frac{\partial G}{\partial s}(v_0,s_0)<0$. Thus, by the implicit function theorem, there exists an open neighbourhood $\mathcal{U}(v_0)\subset S_{\H}$ of $v_0$, and a \emph{continuous} map $\sigma:\,\mathcal{U}(v_0) \to \mathbb{R}^+$ such that $G(v,\sigma(v))=0$ for all $v \in \mathcal{U}(v_0)$; see, e.g.,~\cite[Thm.~4.E]{Zeidler:95}. By uniqueness it follows that $\sigma(v)=t_v^\star$, and the peak selection $p(v)=t_v^\star v=\sigma(v)v$, for $v\in\mathcal{U}(v_0)$, is continuous. Since this holds for any $v_0 \in S_\X$, the peak selection mapping $p:S_\X \to \X$ is globally continuous.

\subsection*{Proof of (ii)}

From the proof of (i) it follows that 
\begin{align} \label{eq:functionalpositive}
 \Ee(p(v))=\max_{t > 0} \E_\varepsilon(tv)=\max_{t>0}g_v(t)\geq \rho>0 \qquad \forall v \in S_{\H},
\end{align}
which proves (ii). 

\subsection*{Proof of (iii)}
Recalling~\eqref{eq:functionalpositive} and~\eqref{eq:gvol}, which is uniform with respect to $v \in S_{\H}$, it follows that there exists $\alpha>0$ such that
\begin{align*}
\argmax_{t>0} \E_\varepsilon(tv)
=\argmax_{t>0} g_v(t) \geq \alpha \qquad \forall v \in S_{\H}.
\end{align*}
Thus, for $L=\{0\}$, we deduce
\begin{align*}
\inf_{y\in L}\norm{p(v)-y}_{\varepsilon}=\norm{p(v)}_\varepsilon=t^\star_v=\argmax_{t>0} \E_\varepsilon(tv) \geq \alpha >0 \qquad \forall v \in S_{\H}.
\end{align*}
This shows (iii).

\subsection*{Proof of (iv)}

We first show that the peak selection $p$ constructed above is bounded, i.e.~there exists $\beta>0$ such that
  \begin{align} \label{eq:pbounded}
   \norm{p(v)}_\varepsilon \leq \beta \qquad \forall v \in S_{\H}.
  \end{align}
If not then there is a sequence $\{p(v^\ell)\}_\ell$ with $t^\star_{v^\ell}=\norm{p(v^\ell)}_\varepsilon \to \infty$ for $\ell \to \infty$. Applying \eqref{eq:functionalpositive} and \eqref{eq:gvupperbound} leads to
\begin{align*}
  0 < \rho  
  \le \E_\varepsilon(p(v^\ell))
  =\E_\varepsilon(t^\star_{v^\ell}v^\ell)
  = g_{v^\ell}(t^\star_{v^\ell})
  \le \frac{\cU}{2}\left(t^\star_{v^\ell}\right)^2
  + a_4 |\Omega| - a_3\left(t^\star_{v^\ell}\right)^\mu \int_\Omega \left|v^\ell\right|^\mu \dx.
\end{align*}
Since $\mu >2$, the right-hand side tends to $-\infty$ for $t^\star_{v^\ell} \to \infty$, which leads to the desired contradiction. Thus, $p$ is bounded.  

Using \eqref{eq:Jprime} and the Cauchy-Schwarz inequality, for $u,v\in\HS(\Omega)$, we find that 
\begin{align*}
 \dprod{\E_\varepsilon'(w),u} &\leq \nnn{w}_\varepsilon \nnn{u}_\varepsilon +\int_\Omega |f(x,w)||u| \dx.
\end{align*}
Invoking (f2), this leads to 
\begin{align*}
 \dprod{\E_\varepsilon'(w),u} &\leq \nnn{w}_\varepsilon \nnn{u}_\varepsilon+a_1 \int_\Omega |u| \dx + a_2 \int_\Omega |w|^s|u| \dx \\
 & \leq \nnn{w}_\varepsilon \nnn{u}_\varepsilon + a_1 \norm{u}_{\L^1(\Omega)}+a_2 \int_\Omega |w|^s|u| \dx.
\end{align*}
Let $\p=\nicefrac{2n}{(n-2)}>2$, and $\q=\nicefrac{\p}{(\p-1)}<\p$ its conjugate, i.e. $\nicefrac{1}{\p}+\nicefrac{1}{\q}=1$. Then, H\"older's inequality yields
\begin{align*}
\dprod{\E_\varepsilon'(w),u} & \leq \nnn{w}_\varepsilon \nnn{u}_\varepsilon + a_1|\Omega|^{\nicefrac12} \norm{u}_{\L^2(\Omega)}+a_2 \norm{w}_{\L^{s\q}(\Omega)}^s \norm{u}_{\L^{\p}(\Omega)}. 
\end{align*}
From (f2) we observe that $s<\p-1$, and, thus, $s\q< \p$. Furthermore, we note that the embedding $\HS(\Omega) \hookrightarrow \L^{\r}(\Omega)$ is continuous for $1\le \r \le \p$. Hence, we deduce the bound
\begin{align*}
\dprod{\E_\varepsilon'(w),u} & \leq \nnn{w}_\varepsilon \nnn{u}_\varepsilon+C_1 \norm{u}_\varepsilon+C_2 \norm{w}_{\varepsilon}^s \norm{u}_{\varepsilon},
\end{align*}
for some constants $C_1,C_2>0$. Then, from~\eqref{eq:nuconst}, we further obtain
\begin{align*}
\dprod{\E_\varepsilon'(w),u} & \leq \cU \norm{w}_\varepsilon \norm{u}_\varepsilon+C_1 \norm{u}_\varepsilon+C_2 \norm{w}_{\varepsilon}^s \norm{u}_{\varepsilon}.
\end{align*}
Consequently, recalling~\eqref{eq:pbounded}, it follows that
\begin{align*}
\norm{\E_\varepsilon'(p(v))}_{\H^\star}
&=\sup_{u \in S_{\H}}\dprod{\E_\varepsilon'(p(v)),u} 
\leq \cU \norm{p(v)}_\varepsilon+C_1+C_2 \norm{p(v)}_\varepsilon^s
\leq \cU \beta+C_1+C_2 \beta^s,
\end{align*}
for all $v \in S_{\H}$. This proves (iv) with $\gamma=\cU \beta+C_1+C_2 \beta^s< \infty$. 
 
\end{appendix}

\bibliographystyle{amsplain}
\bibliography{references}
\end{document}